\newcommand{\R}{\mathbb{R}}
\newcommand{\B}{B\,}
\newcommand{\co}{co}
\newtheorem{thm}{Theorem}[section]
\newtheorem{cor}[thm]{Corollary}
\newtheorem{lema}[thm]{Lemma}
\newtheorem{prop}[thm]{Proposition}
\newtheorem{rem}[thm]{Remark}
\newtheorem{ex}[thm]{Example}
\newtheorem{defn}[thm]{Definition}
\numberwithin{equation}{section}
\begin{document}

\title[Inversion of Nonsmooth Maps]{Inversion of Nonsmooth Maps between Banach spaces}

\author{Jes\'us A. Jaramillo$^1$, Sebasti\'an Lajara$^2$ and \'Oscar  Madiedo$^3$}

\address{$^1$ Instituto de Matem\'atica Interdisciplinar (IMI) and Departamento de An{\'a}lisis Matem{\'a}tico\\  Universidad Complutense de Madrid\\ 28040 Madrid, Spain}

\email{jaramil@mat.ucm.es}

\address{$^2$ Departamento de Matem\'aticas\\ Universidad de Castilla la Mancha\\ Escuela de Ingenieros Industriales, 02071 Albacete, Spain}

\email{sebastian.lajara@uclm.es}

\address{$^3$ Departamento de Matem\'atica Aplicada, Ciencia e Ingenier\'ia de los Materiales y Tecnolog\'ia Electr\'onica\\ Universidad Rey Juan Carlos\\  Calle Tulip\'an, s/n, 28933 M\'ostoles,  Spain}

\email{oscar.madiedo@urjc.es}

\thanks{Research supported in part by MICINN under Project MTM2015-65825-P (Spain). The research of the second author is also supported by MICINN under Project MTM2014-54182-P (Spain) and by Fundaci\'on S\'eneca (Agencia de Ciencia y Tecnolog\'ia de la Regi\'on de Murcia) under Project 19275/PI/14. The research of the third author is also supported by IMI
(Instituto de Matem\'atica Interdisciplinar, Universidad Complutense de Madrid), REDIum (Red de Institutos Universitarios de Matem\'aticas de Espa\~{n}a) and IMACI (Instituto de Matem\'atica Aplicada a la Ciencia y la Ingenier\'ia, Universidad de Castilla-La Mancha).}

%%Different parts of this paper were developed during a visit of \'{O}. Madiedo to IMACI (Albacete).}

\keywords{Global invertibility; nonsmooth analysis; pseudo-Jacobian; Hadamard integral condition.}

\subjclass[2000]{49J52, 49J53, 46G05}

\date{\today}                                           % Activate to display a given date or no date

%%%%%%%%%%%%%%%%%%%%%%%%%%%%%%%%%%%%%%%%%%
%%%%%%%%%%%%%%%%%%%%%%%%%%%%%%%%%%%%%%%%%%%
%%%%\maketitle

\begin{abstract}
We study the invertibility nonsmooth maps between infinite-dimensional Banach spaces. To this end, we introduce
an analogue of the notion of pseudo-Jacobian matrix of Jeyakumar and Luc in this infinite-dimensional setting.
Using this, we obtain several inversion results. In particular, we give a version of the classical Hadamard integral condition for global invertibility in this context.

\end{abstract}
%%%%%%%%%%%%%%%%%%%%%%%%%%%%%%%%%%%%%%%%%%%%
%%%%%%%%%%%%%%%%%%%%%%%%%%%%%%%%%%%%%%%%%%%%

\maketitle

\section{Introduction}

Invertibility of maps is a fundamental issue in nonlinear analysis. In the smooth setting, if $f:X\to Y$ is a $C^1$-map between Banach spaces, such that its derivative $f'(x)$ is an isomorphism for every $x\in X$, of course from the classical Inverse Function Theorem we have that $f$ is locally invertible around each point. If $f$ satisfies in addition the so-called {\it Hadamard integral condition,} that is, if
$$
\int_0^{\infty} \inf_{\Vert x \Vert \leq t} \Vert f'(x)^{-1} \Vert^{-1} \, {\rm d}t = \infty,
$$
then $f:X\to Y$ is globally invertible, and thus a global diffeomorphism from $X$ onto $Y$. We refer to Plastock \cite{Plastock} for a proof of this result. Note that, in particular, the Hadamard integral condition is satisfied whenever
$$
\sup_{x\in X} \Vert f'(x)^{-1} \Vert < \infty,
$$
a requirement which is often referred to as the {\it Hadamard-Levy condition.} These kinds of sufficient conditions for global invertibility were first considered by Hadamard \cite{Hadamard} for maps between finite-dimensional spaces, and then by L\'evy \cite{Le} in the case of maps between Hilbert spaces, and have been widely used since then. We refer to the recent survey of Gut\'u \cite{Gu} for an extensive and detailed information about these and other conditions for global inversion of smooth maps between Banach spaces and, more generally, between Finsler manifolds.

\

In a nonsmooth setting, if $f: \mathbb R^n \to \mathbb R^n$ is a locally Lipschitz map, Clarke obtained in \cite{Clarke} a local inversion result using the {\it Clarke generalized Jacobian} $\partial f$. The corresponding global inversion theorem, using a suitable version of the Hadamard integral condition in this setting, was obtained by Pourciau in \cite{Pourciau1982} and \cite{Pourciau1988}, where it was shown that $f$ is globally invertible provided
$$
\int_0^{\infty} \inf_{\Vert x \Vert \leq t} \inf \{ \Vert T^{-1} \Vert^{-1} :  T \in \partial f (x)\}  \, {\rm d}t = \infty.
$$
An analogous result was achieved in \cite{JaMaSa} for locally Lipschitz maps between finite-dimensional Finsler manifolds. For continuous maps  $f: \mathbb R^n \to \mathbb R^n$ which are not assumed to be locally Lipschitz,  Jeyakumar and Luc introduced in \cite{JL0} the concept of {\it approximate Jacobian matrix,} which later on was named {\it pseudo-Jacobian matrix} (see \cite{JL}). Local inversion results using pseudo-Jacobian matrices have been obtained in \cite{JL1} and \cite{Luc}. Furthermore, a global inversion theorem in terms of such matrices is given in \cite{JaMaSa1},  with a version of the Hadamard integral condition in this context.

\

If $f:X\to Y$ is a nonsmooth map between infinite-dimensional Banach spaces, the problem of local invertibility of $f$ is more delicate. Assuming that $f$ is a local homeomorphism, F. John obtained in \cite{John} a global inversion theorem using a suitable version of the Hadamard integral condition in terms of the {\it lower scalar  Dini derivative} of $f$, which is defined for  each $x\in X$ as
$$
D^{-}_x f =\liminf_{z\to x} \frac{\|f(z) - f(x)\|}{\|z - x\|}.
$$
Namely, he proved that $f$ is globally invertible if
$$
\int_0^{\infty} \inf_{\Vert x \Vert \leq t}  D^{-}_x f \, {\rm d}t = \infty.
$$
Further results along this line have been obtained recently in \cite{GutuJaramillo} and \cite{GaGuJa}  in the more general setting of maps between metric spaces.

\

Our aim here is to study the invertibility of continuous, nonsmooth maps between infinite-dimensional Banach spaces, using a suitable notion of pseudo-Jacobian. The contents of the paper are as follows. In Section 2 we define pseudo-Jacobians in this setting  and we obtain some of their basic properties. Given a map $f:X \to Y$ between Banach spaces and a point $x\in X$, a {\it pseudo-Jacobian} for $f$ at $x$ will be a subset of continuous linear operators $Jf(x) \subset \mathcal{L}(X, Y)$ which provides some control of the size of the upper Dini directional derivatives at $x$ of all compositions of the form $y^*\circ f$, where $y^*$ runs over all continuous linear functionals on $Y$ (see Definition \ref{pseudo-jacobian}). We first show a variety of examples of pseudo-Jacobians, such as the classical weak-G\^ateaux derivative, or the G\^ateaux prederivative in the sense of Ioffe (see \cite{Ioffe}). For locally Lipschitz real-valued functions, the Clarke subdifferential is a pseudo-Jacobian; more generally, in the case that the map is locally Lipschitz and the target space is {\it reflexive}, we see that the generalized Jacobian of P\'ales and Zeidan (see \cite{PZ-07}),  which is an infinite-dimensional extension of the Clarke generalized Jacobian, constitutes an example of pseudo-Jacobian. Next, we obtain a suitable version of the Mean Value inequality using pseudo-Jacobians in Theorem \ref{MVT}, and an optimality condition in Theorem \ref{optimality}. Since the chain rule does not hold for G\^ateaux differentiability, of course we cannot expect a general exact chain rule in our setting, but we obtain some partial results in this direction which will be useful along the paper. In particular, we give in Proposition \ref{chainrulebis} and Theorem \ref{chainrule}, respectively, a smooth-nonsmooth and a nonsmooth-smooth version of the chain rule, when one of the involved maps has some differentiability properties. Section 3 is devoted to study the invertibility of nonsmooth maps. Our first result (Theorem \ref{main-open}) is an open mapping theorem involving the Rabier surjectivity index (see \cite{Rabier}) for pseudo-Jacobians satisfying a technical condition, which we call {\it chain rule condition} (see Definition \ref{crc}). This condition applies, in particular, to the case of G\^ateaux derivatives, and thus we obtain as a special case the Ekeland's open mapping theorem in \cite{Ekeland} (see Corollary \ref{Ekeland}). Subsequently, we establish a local inversion theorem in terms of a new index for pseudo-Jacobians satisfying the chain rule condition (see Theorem \ref{local-inversion}).
This result is used to achieve a global inversion theorem in terms of a suitable version of the Hadamard integral condition in this setting (see Theorem \ref{main-H}). In the case of maps between reflexive spaces, we deduce in Corollary \ref{pour} a global inversion theorem involving upper semicontinuous {\it P\'ales-Zeidan generalized Jacobians} (see \cite{PZ-07}). In this way we extend the results of Pourciau in \cite{Pourciau1982} and \cite{Pourciau1988} to the infinite-dimensional setting.  The aforementioned results are applied to obtain several sufficient conditions regarding global inversion of Lipschitz perturbations of smooth maps. Finally, we combine our local inversion theorem together with Plastock's limiting condition of global invertibility of local homeomorphisms in \cite{Plastock} to give a full characterization of homeomorphisms between Banach spaces in terms of pseudo-Jacobians (see Theorem \ref{main2}).

\section{Pseudo-Jacobians}

In this section, we study the analogue of pseudo-Jacobian matrices of Jeyakumar and Luc (see \cite{JL}) in the setting of arbitrary Banach spaces. First, we fix some notations.
In what follows, $X$ and $Y$ will denote Banach spaces and $U$ a (nonempty) open subset of $X$. The symbols $X^*$, $B_X$ and $\overline{B}_X$ will stand for the topological dual of $X$ and the open and closed unit balls of $X$, respectively, and the space of bounded linear operators from $X$ into $Y$ will be denoted as $\mathcal{L}(X,Y)$. It is natural to consider on this space the \emph{weak operator topology} ({\tiny \emph{WOT}} for short),  that is, the topology of the pointwise convergence on $X$ with respect to the weak topology on $Y$; this means that a net $(T_i)_i$ is {\tiny \emph{WOT}}-convergent to $T$  in $\mathcal{L}(X,Y)$ if, and only if, for each $y^*\in Y$ and each $v\in X$ the net $(\left\langle y^*,\, (T_i-T)(v)\right\rangle)_i$ converges to zero. We shall also deal along the paper with the \emph{strong operator topology} ({\tiny \emph{SOT}} for short), that is, the topology of the pointwise convergence on $X$ with respect to the norm topology on $Y$. Finally, we recall that if $\varphi :U \to \mathbb R$ is a real-valued function and  $x$ is a point in $U$, then the \emph{upper} and \emph{lower right-hand Dini derivatives} of $\varphi$ at  $x$ with respect to a vector $v\in X$ are defined as
$$
\varphi'_{+}(x;\, v) = \limsup_{t\to 0^+} \frac{\varphi(x+tv)-\varphi(x)}{t}
\quad \quad \text{and
} \quad \quad
\varphi'_{-}(x;\, v) = \liminf_{t\to 0^+} \frac{\varphi(x+tv)-\varphi(x)}{t}.
$$
We refer to \cite{Clarkebook}, \cite{Fabian} or \cite{Ph} for the definition and basic properties of different kinds of differentiability of maps between Banach spaces and other unexplained notions.

\

The following is the main concept of this work.

\begin{defn}\label{pseudo-jacobian}
Let $X$ and $Y$ be Banach spaces,  $U$ be an open subset of $X$ and $f: U \to Y$  a map. We say that a nonempty subset  $Jf(x)\subset \mathcal{L}(X, Y)$ is a pseudo-Jacobian of $f$ at a point $x\in U$ if
\begin{equation}
(y^* \circ f)'_{+}(x;\, v) \leq \sup \{ \langle y^*,\, T(v) \rangle \, : \, T\in Jf(x) \} \quad \text{whenever} \,\, y^* \in Y^* \,\, \text{and} \,\,  v\in X.\label{pd-1}
\end{equation}
A set-valued mapping $Jf:U\to 2^{\mathcal{L}(X,Y)}$ is called a pseudo-Jacobian mapping for $f$ on $U$ if for every $x\in U$, the set $Jf(x)$ is a pseudo-Jacobian of $f$ at $x$.
\end{defn}

Let us summarize some basic facts on pseudo-Jacobians. It is clear that, if $Jf(x)$ is a pseudo-Jacobian of $f$ at the point $x$, then any subset of $\mathcal{L}(X,Y)$ containing $Jf(x)$ is also a pseudo-Jacobian of $f$ at $x$. Moreover, for any set $\mathcal{F}\subset \mathcal{L}(X, Y)$ we have
$$
\sup \left\{\left\langle y^*,\, T(v)\right\rangle \, : \, T\in \mathcal{F} \right\}= \sup \left\{\left\langle y^*,\, T(v)\right\rangle \, :\, T\in \overline{\co}^{WOT} (\mathcal{F})\right\},
$$
where $\overline{\co}^{WOT} (\mathcal{F})$ denotes the {\tiny \emph{WOT}}-closed convex hull of $\mathcal{F}$. Thus, a subset $Jf(x)\subset \mathcal{L}(X, Y)$ is a pseudo-Jacobian of $f$ at $x$ if, and only if, so is its {\tiny \emph{WOT}}-closed convex hull.

\

Observe also that, since
$$
(-y^*\circ f)'_{+}(x;\, v)=-(y^*\circ f)'_{-}(x;\, v) \quad \text{ and } \sup \limits_{T\in Jf(x)} \left\langle -y^*,\,T(v) \right\rangle= -\inf \limits_{T\in Jf(x)} \left\langle y^*,\,T(v)\right\rangle,
$$
we have that $Jf(x)$ is a pseudo-Jacobian of $f$ at $x$ if, and only if,
\begin{equation}
(y^*\circ f)'_{-}(x;\, v)\geq \inf\{ \langle y^*,\, T(v) \rangle \, : \, T\in Jf(x) \}, \quad \text{whenever} \, \, y^*\in Y^*\,\, \text{and} \,\, v\in X.\label{pd-2}
\end{equation}
As a consequence of this inequality and \eqref{pd-1} (and the fact that there are only two directions in $\mathbb R$), we deduce that if $\varphi: U \to \mathbb R$ is a real-valued function, then a nonempty subset $J\varphi(x) \subset X^*$ is a pseudo-Jacobian of $f$ at $x\in U$ if, and only if, for every $v\in X$,
\begin{equation}
\inf \limits_{x^* \in J\varphi(x)} x^*(v)\leq \varphi'_{-}(x;\, v)\leq \varphi'_{+}(x;\, v)\leq \sup \limits_{x^* \in J\varphi(x)} x^*(v).\label{pd-scalar}
\end{equation}
Finally, we notice that the same argument as in the finite-dimensional case (see \cite[Theorem 2.1.1]{JL}) yields that if $Jf(x)$ and $Jg(x)$ are pseudo-Jacobians of maps $f,\, g:U\subset X\to Y$ at a point $x\in U$ and $\alpha\in \mathbb R$, then the set $\alpha Jf(x) + Jg(x) = \{\alpha T+S:\, T\in Jf(x),\, S\in Jg(x)\}$ is a pseudo-Jacobian of the map $\alpha f + g$ at this point.

Many examples of pseudo-Jacobians for maps between finite dimensional Banach spaces are given in \cite{JL}.
Next, we will present some examples in the general setting.

\begin{ex}\label{wG}
\rm{
A map $f:U\subset X\to Y$ is said to be \emph{weakly G\^ateaux differentiable} at a point $x\in U$ if
there exists an operator $f'(x)\in \mathcal{L}(X,Y)$ (called the \emph{weak derivative of $f$ at $x$}) such that
$$
\lim_{t\to 0} \left \langle y^*, \frac{f(x+tv)-f(x)}{t}\right \rangle = \left\langle y^*,\, f'(x)(v)\right\rangle, \quad \text{for all} \quad v\in X \quad \text{and} \quad y^*\in Y^*.
$$
It is clear that if $f$ is  weakly G\^ateaux differentiable at $x$, then the set $Jf(x):=\{f'(x)\}$ is a pseudo-Jacobian of $f$ at $x$. Conversely, if $f$ admits a pseudo-Jacobian at $x$ which is a singleton, $Jf(x)=\{T\}$, then $f$ is weakly G\^ateux differentiable at $x$ and $f'(x)=T$. Indeed, according to \eqref{pd-1} and \eqref{pd-2}, for each $y^*\in Y^*$ and each $v\in X$, we get
$$
\left\langle y^*,\, (Tv)\right\rangle \leq (y^* \circ f)_{-}'(x;v)\leq (y^* \circ f)_{+}'(x;v)\leq  \left\langle y^*,\, T(v)\right\rangle,$$
and hence we have the right-hand limit
$$
\lim_{t\to 0^{+}} \left \langle y^*, \frac{f(x+tv)-f(x)}{t}\right \rangle = \left\langle y^*, T(v)\right\rangle.
$$
Furthermore,
$$
\lim_{t\to 0^{-}} \left \langle y^*, \frac{f(x+tv)-f(x)}{t}\right \rangle = \lim_{s\to 0^{+}} \left \langle y^*, \frac{f(x-sv)-f(x)}{-s}\right \rangle = -\left\langle y^*, T(-v)\right\rangle = \left\langle y^*, T(v)\right\rangle.
$$
Thus, $f$ is weakly G\^ateaux differentiable at $x$, with  $f'(x)=T$.} \hfill $ \square$
\end{ex}

\

\begin{ex} \label{Ioffe}
\rm{
Let $f:U\subset X\to Y$ be a map. According to Ioffe \cite{Ioffe}, a nonempty subset $Jf(x) \subset \mathcal{L}(X, Y)$ is said to be a \emph{G\^ateaux prederivative} of $f$ at a point $x\in U$ if, for each $\varepsilon>0$  and each $v\in X$, there exists some $\delta>0$ such that, whenever $\vert t \vert < \delta$,  we have
$$
f(x+tv) - f(x) \in Jf(x)(t v) + \varepsilon \vert t \vert \cdot \overline{B}_{Y},
$$
where
$$
Jf(x)(t v) = \{T(t v) \, : \, T\in Jf(x) \}.
$$
We claim that, if $Jf(x)$ is a  G\^ateaux prederivative of $f$ at the point $x\in U$, then it is a pseudo-Jacobian of $f$ at $x$. Indeed, fix a nonzero vector $v\in X$. Given $\varepsilon >0$, choose $\delta>0$ as in the previous definition. For each $0< t< \delta$ there exist some $T_t\in Jf(x)$ and some $y_t\in \overline{B}_{Y}$ such that,  for all $y^*\in Y^*$,
$$
\frac{1}{t} \big( (y^* \circ f)(x + tv)-(y^* \circ f)(x) \big) = \langle y^*, T_t(v)\rangle + \varepsilon  \langle y^*, y_t\rangle.
$$
It follows that
$$
(y^* \circ f)'_{+}(x;v) \leq \sup\{\langle y^*, T(v) \rangle: T\in Jf(x)\}.
$$
$\hfill  \square$
}
\end{ex}

\

Now, we shall give some examples of pseudo-Jacobians for locally Lipschitz maps. Recall that a map $f:U\subset X\to Y$ is said to be \emph{locally Lipschitz} at a point $x\in U$ if there exist $L,\, r> 0$ such that $\|f(u)-f(v)\|\leq L\|u-v\|$ whenever $u,v$ belong to the open ball $B(x,r)\subset U$. In such case, we define the \emph{local Lipschitz constant} of $f$ at $x$ as the number
$$
{\rm Lip}\, f (x) = \inf_{r>0} \sup \left\{ \frac{\Vert f(u) - f(v) \Vert}{\Vert u -v \Vert}: \,  \,\,\, u, v\in B(x, r)\,\,\, \text{and} \,\,\, u\neq v \right\}.
$$

\

\begin{ex}\label{Clarke}
\rm{
Consider a real-valued function $\varphi:U\subset X\to \mathbb R$  which is locally Lipschitz at a point $x\in U$. Recall that the \emph{Clarke subdifferential} of $\varphi$ at $x$ is defined as
$$
\partial \varphi(x) := \{x^* \in X^* \, : \, x^*(v) \leq \varphi^{\circ}(x;\, v), \,\,\, \text{for all} \,\,\, v \in X\},
$$
where $\varphi^{\circ}(x;\, v)$ is the \emph{Clarke generalized directional derivative} of $\varphi$ at $x$ with respect to a vector $v$, that is,
$$
\varphi^{\circ}(x;\, v): = \limsup \limits_{\substack{ z\to x \\
						                   t \to 0^{+} }} \,\frac{\varphi(z + tv)-\varphi(z)}{t}.
$$
We are going to see that the set $\partial \varphi(x)$ is a pseudo-Jacobian of $\varphi$ at $x$. From the fundamental properties of the Clarke subdifferential (see e. g. \cite[Proposition 2.1.2]{Clarkebook}), we have that $\partial \varphi(x)$ is a nonempty, convex and $w^*$-compact subset of $X^*$, and for each $v\in X$,
$$
\varphi^{\circ}(x; v) = \max \{x^*(v) \, : \, x^*\in \partial \varphi(x)\}.
$$
From this, it is clear that
$$
\varphi'_{+}(x; v) \leq \max \{x^*(v) \, : \, x^*\in \partial \varphi(x)\}.
$$
Furthermore,
$$
\begin{aligned} \varphi'_{-}(x; v) &= \liminf_{t\to 0^+} \,\frac{\varphi(x+tv)-\varphi(x)}{t} \geq \liminf \limits_{\substack{ z\to x \\
			                                                           t \to 0^{+}}} \,\frac{\varphi(z)-\varphi(z-tv)}{t} \\
&= -\varphi^{\circ}(x; -v) \geq \min \{x^*(v) \, : \, x^*\in \partial \varphi(x)\}.
\end{aligned}$$
So, the conclusion follows from inequalities \eqref{pd-scalar}.}
\hfill $ \square$
\end{ex}

\begin{ex}\label{Thibault}
\rm{Let $f:U\subset X\to Y$ be a map, which is locally Lipschitz at a point $x\in U$. Note that, for every $y^*\in Y^*$, the composition $y^*\circ f$ is also locally Lipschitz at $x$. Following Thibault \cite{Thibault}, we say that a subset  $Jf(x) \subset {\mathcal L}(X, Y)$ is a \emph{Clarke-like generalized Jacobian} of $f$ at $x$ if the following conditions are satisfied:
\begin{enumerate}
\item[(i)] $Jf(x)$ is a nonempty, convex and {\tiny \emph{WOT}}-compact subset of  ${\mathcal L}(X, Y)$, and
\item[(ii)] For every $y^*\in Y^*$, we have that $y^* \circ Jf(x) = \partial (y^*\circ f)(x)$, where
$$
y^* \circ Jf(x):= \{y^* \circ T \, : \, T\in Jf(x) \}.
$$
\end{enumerate}
It is plain from Example \ref{Clarke} above that, in this case, $Jf(x)$ is a pseudo-Jacobian of $f$ at $x$.

An important example of Clarke-like generalized Jacobian, in the case where the target space is reflexive, is the generalized Jacobian $\partial f:X\to 2^{\mathcal{L}(X,Y)}$ considered by P\'ales and Zeidan in \cite{PZ-07} for every  map $f:U\subset X\to Y$ which is locally Lipschitz at a point $x\in U$.
Let us recall the definition of this mapping. Given a finite-dimensional linear subspace $L\subset X$, we say that $f$ is {\it $L$-G\^ateaux-differentiable} at a point $z\in U$ if there exists a continuous linear map $D_L(z):L\to Y$ such that
$$
\lim_{t \to 0} \frac{f(z+tv) - f(z)}{t}= D_L f(z)(v), \quad \text{for every} \quad v\in L.
$$
Denote by $\Omega_L(f)$ the set made up of all points $z\in U$ such that $f$ is $L$-G\^ateaux-differentiable at $z$, and let $\partial_L f(x)$ be the subset of $\mathcal{L}(L,\, Y)$ given by the the formula
$$
\partial_L f(x) := \bigcap_{\delta>0} \overline{co}^{\tiny WOT} \{ D_L f(z) \, : \, z\in B(x, \, \delta) \cap \Omega_L(f) \}.
$$
The \emph{P\'ales-Zeidan generalized Jacobian of $f$ at the point $x$} is the set
$$\partial f(x) = \left\{T\in \mathcal{L}(X,\, Y):\,\, T_{ \upharpoonright L }\in \partial_L f(x),\,\, \text{for each finite dimensional subspace}\, L\subset X\right\}.$$

Combining \cite[Theorem 3.8]{PZ-07} and \cite[Theorem 3.7]{PZ-07},  and taking into account that if $Y$ is a reflexive Banach space then $Y$ has the Radon-Nikod\'ym property  and the topology $\beta (X, V)=\beta(X, Y^*)$ considered in \cite{PZ-07} coincides with the {\tiny \emph{WOT}}-topology on ${\mathcal L}(X, Y)$, it follows that if $f:U\subset X\to Y$ is a locally Lipschitz map and $Y$ is reflexive, then the set $\partial f(x)$ satisfies conditions $(i)$ and $(ii)$ above for every $x\in U$. In particular, $\partial f$ is a pseudo-Jacobian mapping for $f$ on $U$. If $X$ and $Y$ are finite-dimensional, then $\partial f$ coincides with the usual Clarke generalized gradient (see \cite{Clarkebook}).
}
\hfill $ \square$
\end{ex}

\

Of special importance for us are upper semicontinuous pseudo-Jacobians.  A pseudo-Jacobian mapping $Jf:U\to 2^{\mathcal{L}(X,Y)}$ for a map $f:U\to Y$ is said to be {\it upper semicontinuous} at a point $x\in U$ if for every $\varepsilon>0$ there exists some $\delta>0$ such that $B(x,\delta)\subset U$ and $$Jf \bigl(B(x, \delta)\bigr) \subset Jf (x) + \varepsilon \cdot B_{\mathcal{L}(X,Y)},$$ where $Jf\bigl( B(x, \delta)\bigr) = \left\{T:\, T\in Jf(z),\, z\in B(x,\delta) \right\}.$ It is clear that if the map $f$ is $\mathcal{C}^1$-smooth, then the pseudo-Jacobian mapping $Jf = \{f'\}$ is upper semicontinuous at every $x\in U$. It is also well-known (see e.g. \cite{Clarkebook}) that if $X$ and $Y$ are finite-dimensional and the map $f$ is locally Lipschitz, then the Clarke generalized Jacobian $\partial f:X\to 2^{\mathcal{L}(X,Y)}$ is upper semicontinuous.  Next, we give another example of upper semicontinuous pseudo-Jacobian in the nonsmooth setting.

\begin{ex}\label{Loc-Lip}
\rm{
Let $f:U\subset X\to Y$ be a map, which is locally Lipschitz at a point $x\in U$, and let $Jf(x)$ be the closed unit ball centered at zero of radius ${\rm Lip} \, f (x)$ in the space of bounded linear operators ${\mathcal L}(X, Y)$, that is,
$$
Jf(x):= {\rm Lip}\, f (x) \cdot \overline{B}_ {{\mathcal L}(X, Y)}.
$$
Then, $Jf(x)$ is a  pseudo-Jacobian of $f$ at $x$. Moreover, if $Jf$ is locally Lipschitz on $U$, then the pseudo-Jacobian $Jf$ is is upper semicontinuous at every $x\in U$. Indeed, denote $K={\rm Lip}\, f (x)$, and fix nonzero vectors $v\in X$ and $y^*\in Y^*$. Given $\varepsilon>0$, choose $r>0$ such that $f$ is $(K + \varepsilon)$-Lipschitz on the ball $B(x, r)$.  If $0< t\Vert v \Vert <r$ then
$$
\vert y^* (f(x+tv)) - y^*(f(x))\vert \leq t (K + \varepsilon) \Vert y^* \Vert \cdot \Vert v \Vert.
$$
Thus,
$$
(y^* \circ f)'_{+}(x;v) \leq K\cdot \Vert y^* \Vert \cdot \Vert v \Vert.
$$
Next,  we will show that
$$
K\cdot \Vert y^* \Vert \cdot \Vert v \Vert = \sup\{\langle y^*, T(v) \rangle: \Vert T \Vert \leq K\}.
$$
Choose $v^*\in X^*$ with $\Vert v^* \Vert =1$ and $v^*(v) = \Vert v \Vert$. Given $\varepsilon>0$, pick $y\in Y$ such that $\Vert y\Vert \leq 1$ and $y^*(y) \geq \Vert y^* \Vert - \varepsilon$. Now, define the operator  $T:X \to Y$ by setting $$T(z)=K\cdot v^*(z)\cdot y, \quad z\in X.$$
Note that $\Vert T \Vert \leq K$ and, furthermore,
$$
\langle y^*, T(v)\rangle = \langle y^*, K \Vert v \Vert y\rangle \geq K \Vert v \Vert (\Vert y^*\Vert - \varepsilon).
$$
Hence, $Jf(x)$ is a pseudo-Jacobian of $f$ at $x$.

It remains to show that $Jf$ is upper semicontinuous. In order to see it,  fix $x\in U$ and $\varepsilon>0$, and choose $\delta>0$ such that
$$
\sup \left\{ \frac{\Vert f(u) - f(v) \Vert}{\Vert u -v \Vert}: \,  \,\,\, u, v\in B(x, \delta)\,\,\, \text{and} \,\,\, u\neq v \right\} < {\rm Lip}\, f (x) + \varepsilon.
$$
For each $z \in B(x, r)$, choosing $\delta'>0$ such that $B(z, \delta') \subset B(x, r)$ we have
$$
\sup \left\{ \frac{\Vert f(u) - f(v) \Vert}{\Vert u -v \Vert}: \,  \,\,\, u, v\in B(z, \delta')\,\,\, \text{and} \,\,\, u\neq v \right\} < {\rm Lip}\, f (x) + \varepsilon.
$$
Thus, ${\rm Lip}\, f (z) \leq {\rm Lip}\, f (x) + \varepsilon$ for all $z \in B(x, \delta)$. Hence, $Jf(z) \subset Jf(x) + \varepsilon \overline{B}_ {{\mathcal L}(X, Y)}$.
}
\hfill $ \square$
\end{ex}

The next result  provides a ``mean value property" for pseudo-Jacobians.

\begin{thm}[Mean value property]\label{MVT}
Let $X, Y$ be Banach spaces and $U$ an open convex subset of $X$. Suppose that  $f:U\to Y$ is  continuous and $Jf$ is a pseudo-Jacobian mapping for $f$ on $U$. Then, for each $u,v\in U$,
$$
f(v)-f(u) \in \overline{\co \,} \bigl(Jf([u,v])(v-u)\bigr).
$$
\end{thm}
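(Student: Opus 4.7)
The natural strategy is a Hahn--Banach separation argument combined with the classical one-sided mean value theorem on the real line applied to a single scalar parametrization along the segment $[u,v]$.

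First I would set $C := \overline{\co}\,\bigl(Jf([u,v])(v-u)\bigr)$ and suppose, aiming at a contradiction, that $f(v) - f(u) \notin C$. Since $C$ is a closed convex subset of $Y$, the Hahn--Banach theorem provides some $y^{*}\in Y^{*}$ and some $\alpha\in\mathbb R$ with
\begin{equation*}
\langle y^{*},\, f(v) - f(u)\rangle \;>\; \alpha \;\geq\; \langle y^{*},\, T(v-u)\rangle
\quad\text{for every } T\in Jf(w),\ w\in[u,v].
\end{equation*}

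Next I would reduce to a one-dimensional problem by introducing
$$
g:[0,1]\to\mathbb R, \qquad g(t) := \langle y^{*},\, f(u + t(v-u))\rangle.
$$
Because $U$ is convex, the segment $[u,v]$ lies in $U$, so $g$ is well defined and continuous. For each $t\in[0,1)$ and each small $s>0$, write $x_t = u + t(v-u)$ and compute
$$
\frac{g(t+s)-g(t)}{s} \;=\; \frac{(y^{*}\circ f)(x_t + s(v-u)) - (y^{*}\circ f)(x_t)}{s}.
$$
Taking $\limsup$ as $s\to 0^{+}$ gives $g'_{+}(t) = (y^{*}\circ f)'_{+}(x_t;\, v-u)$, and the defining inequality \eqref{pd-1} of the pseudo-Jacobian together with the choice of $\alpha$ yields
$$
g'_{+}(t) \;\leq\; \sup\{\langle y^{*},\, T(v-u)\rangle : T\in Jf(x_t)\} \;\leq\; \alpha
\qquad\text{for all } t\in[0,1).
$$

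Finally I would invoke the classical real-variable lemma: if $g:[0,1]\to\mathbb R$ is continuous and its upper right Dini derivative satisfies $g'_{+}(t)\leq \alpha$ on $[0,1)$, then $g(1)-g(0)\leq \alpha$. (The standard proof considers $h_\varepsilon(t) = g(t) - (\alpha+\varepsilon)t$ and shows it is non-increasing by a connectedness argument, then lets $\varepsilon\downarrow 0$.) Applied here this gives $\langle y^{*},\, f(v)-f(u)\rangle = g(1)-g(0)\leq \alpha$, contradicting the strict inequality obtained from Hahn--Banach.

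The only non-routine step is the scalar Dini mean value lemma, but it is entirely classical. The rest is a bookkeeping check that evaluating the directional derivative of $y^{*}\circ f$ at $x_t$ in direction $v-u$ is exactly the right-hand derivative of $g$ at $t$; that check only uses the definitions. The role of continuity of $f$ is precisely to guarantee continuity of $g$, which is what the real-variable lemma requires.
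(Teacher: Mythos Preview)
Your proof is correct and shares with the paper the same opening move: assume $f(v)-f(u)\notin C$, separate by Hahn--Banach to obtain $y^*$, and reduce to a one-variable problem on $[0,1]$ via $g(t)=\langle y^*,f(u+t(v-u))\rangle$.

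The divergence is in how the scalar step is handled. The paper subtracts a linear term so that the auxiliary function has equal values at $0$ and $1$, and then locates an interior extremum $t_0\in(0,1)$; it must split into two cases (minimum versus maximum), invoking inequality \eqref{pd-1} at a minimum and the dual inequality \eqref{pd-2} at a maximum. You instead bound $g'_{+}(t)\leq\alpha$ uniformly on $[0,1)$ using only \eqref{pd-1} and then quote the classical Dini mean value lemma to conclude $g(1)-g(0)\leq\alpha$. Your route is a bit more economical---no case split, and only one side of the pseudo-Jacobian definition is used---at the cost of citing an external (though entirely standard) real-variable lemma; the paper's Rolle-type argument is more self-contained but slightly longer. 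Both are equally valid.
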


\begin{proof}
Notice that the right-hand side above denotes the norm-closed convex hull in $Y$ of the set of all points of the form $T(v-u)$, where $T\in Jf(z)$ for some $z \in [u,v]$. For the proof suppose that, on the contrary, $f(v)-f(u) \notin \overline{\co \,} \bigl(Jf([u,v])(v-u)\bigr)$. Then, by the Hahn-Banach separation Theorem there exists a functional $y^*\in Y^*$ such that
\begin{equation}
\left\langle y^*, f(v)-f(u) \right\rangle > \sup \left\{ \left\langle y^*, y \right\rangle \, : \, y\in \overline{co}(Jf([u,v])(v-u))\right\}.\label{MTV-contradiction}
\end{equation}
Now consider the real-valued function $g:[0,1]\to \R$ defined as
$$
g(t) = \big\langle y^*, f(u + t(v-u)) - f(u) +t(f(u)-f(v)) \big\rangle.
$$
Note that $g$ is continuous on $[0,1]$ with $g(0) = g(1)$. Thus, the function $g$ attains its minimum or maximum at some point $t_0$ belonging to the open interval $(0,1)$. Suppose first that $t_0$ is a point of minimum. Then,
\begin{align*}
0 \, & \leq \, g'_{+}(t_0; 1) = \limsup_{t \to 0^+} \frac{1}{t} \big( g(t_0+t) - g(t_0)\big) \\
 & = \limsup_{t \to 0^+} \frac{1}{t} \big(\big\langle y^*, f(u+t_0(v-u)+t(v-u))-f(u+t_0(v-u)) + t(f(u)-f(v)) \big\rangle \big) \\
 & = (y^* \circ f)'_{+}(u+t_0(v-u); v-u) + \left\langle y^*, f(u)-f(v)\right\rangle.
\end{align*}
And therefore, using \eqref{pd-1},
\begin{align*}
\left\langle y^*, f(v)-f(u)\right\rangle & \leq (y^* \circ f)'_{+}(u+t_0(v-u); v-u) \\
& \leq \sup \left\{\left\langle y^*, T(v-u) \right\rangle \, : \, T\in Jf(u+t_0(v-u)) \right\}.
\end{align*}
Suppose now that $t_0$ is a point of maximum. Then,
\begin{align*}
0 \, & \geq \, g'_{-}(t_0; -1) = \liminf_{t \to 0^+} \frac{1}{t} \big( g(t_0 -t) - g(t_0)\big) \\
 & = \liminf_{t \to 0^+} \frac{1}{t} \big(\big\langle y^*, f(u+t_0(v-u)-t(v-u))-f(u+t_0(v-u)) - t(f(u)-f(v)) \big\rangle \big) \\
 & = (y^* \circ f)'_{-}(u+t_0(v-u); u-v) + \left\langle y^*, f(v)-f(u)\right\rangle.
\end{align*}
Hence, using now \eqref{pd-2},
\begin{align*}
\langle y^*, f(v)-f(u)\rangle & \leq - (y^* \circ f)'_{-}(u+t_0(v-u); u-v) \\
& \leq \sup \left\{- \langle y^*, T(u-v) \rangle \, : \, T\in Jf(u+t_0(v-u)) \right\} \\
& = \sup \left\{\langle y^*, T(v-u) \rangle \, : \, T\in Jf(u+t_0(v-u)) \right\}.
\end{align*}
Thus, in either case we obtain the inequality
\begin{align*}
\left\langle y^*, f(v)-f(u)\right\rangle & \leq \sup \left\{\left\langle y^*, T(v-u) \right\rangle \, : \, T\in Jf([u,v]) \right\} \\
& \leq \sup \left\{\langle y^*, y \rangle \, : \, y\in \overline{co}(Jf([u,v])(v-u)) \right\},
\end{align*}
which is a contradiction with \eqref{MTV-contradiction}.

\end{proof}

\

Let $f:U\subset X\to Y$ a map and $x\in U$. A pseudo-Jacobian mapping  $Jf: U \to 2^{{\mathcal L}(X, Y)}$ for $f$ on $U$ is said to be  \emph{locally bounded} (resp. \emph{locally  {\tiny WOT}-compact}, resp. \emph{locally {\tiny SOT}-compact}) at $x$, if there exists $r>0$ such that $B(x, r) \subset U$ and the set of operators
$Jf\bigl(B(x, r)\bigr)$ is bounded (resp. relatively {\tiny WOT}-compact, resp. relatively {\tiny SOT}-compact) in the space $\mathcal{L}(X,Y)$. We say that $Jf$ is \emph{locally bounded}   (resp. \emph{locally  {\tiny WOT}-compact}, resp. \emph{locally {\tiny SOT}-compact})  \emph{on} $U$ if this holds for every $x\in U$.

As a consequence of the mean value property, we obtain the following result.

\begin{cor}\label{local-lipschitness}
Let $f:U\to Y$ be a continuous map. Then, $f$ admits a locally bounded pseudo-Jacobian mapping on $U$ if, and only if, $f$ is locally Lipschitz on $U$.
\end{cor}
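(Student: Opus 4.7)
The plan is to establish each implication separately, treating the result as an immediate consequence of the Mean Value Property.

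For the direction that locally Lipschitz implies the existence of a locally bounded pseudo-Jacobian, Example \ref{Loc-Lip} already exhibits the natural candidate, namely $Jf(x):={\rm Lip}\,f(x)\cdot \overline{B}_{\mathcal{L}(X,Y)}$. It only remains to verify local boundedness. Given $x\in U$, I would choose $r>0$ and $L>0$ so that $f$ is $L$-Lipschitz on $B(x,r)$. Then for every $z\in B(x,r/2)$ one has $B(z,r/2)\subset B(x,r)$, so $f$ is $L$-Lipschitz on $B(z,r/2)$, and hence ${\rm Lip}\,f(z)\leq L$. Consequently
$$
Jf\bigl(B(x,r/2)\bigr)\subset L\cdot \overline{B}_{\mathcal{L}(X,Y)},
$$
which is bounded in $\mathcal{L}(X,Y)$.

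For the converse, suppose $Jf$ is a locally bounded pseudo-Jacobian mapping for $f$ on $U$. Fix $x\in U$ and use the hypothesis to pick $r>0$ such that $B(x,r)\subset U$ and
$$
M:=\sup\bigl\{\|T\|:T\in Jf(B(x,r))\bigr\}<\infty.
$$
Since $B(x,r)$ is open and convex and the restriction of $Jf$ to $B(x,r)$ remains a pseudo-Jacobian mapping, Theorem \ref{MVT} applied on this ball gives, for all $u,v\in B(x,r)$,
$$
f(v)-f(u)\in \overline{\co\,}\bigl(Jf([u,v])(v-u)\bigr).
$$
Because $[u,v]\subset B(x,r)$, every operator $T\in Jf([u,v])$ satisfies $\|T\|\leq M$, and so $\|T(v-u)\|\leq M\|v-u\|$. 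Thus the set $Jf([u,v])(v-u)$ is contained in the norm-closed convex ball $M\|v-u\|\cdot \overline{B}_Y$; the same containment is then inherited by its norm-closed convex hull. This yields $\|f(v)-f(u)\|\leq M\|v-u\|$, showing that $f$ is $M$-Lipschitz on $B(x,r)$.

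I do not anticipate any serious obstacle: the result is essentially a direct corollary of Theorem \ref{MVT}. The only step requiring a moment's attention is observing that the uniform operator bound propagates through $T(v-u)$ and survives the norm-closed convex hull, which is automatic since closed balls in $Y$ are norm-closed and convex.
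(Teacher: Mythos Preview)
Your proof is correct and follows essentially the same route as the paper: both directions hinge on Example~\ref{Loc-Lip} and Theorem~\ref{MVT}. The only cosmetic difference is that for the forward implication the paper extracts an operator $T\in\co\,Jf([u,v])$ with $\|f(v)-f(u)-T(v-u)\|<\varepsilon$ and then lets $\varepsilon\to 0$, whereas you argue directly via the containment $\overline{\co\,}\bigl(Jf([u,v])(v-u)\bigr)\subset M\|v-u\|\cdot\overline{B}_Y$; your version is slightly cleaner but not materially different.
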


\begin{proof}
Suppose first that $Jf: U \to 2^{{\mathcal L}(X, Y)}$ is a locally bounded pseudo-Jacobian for $f$. Given $x\in U$ choose $L,\, r> 0$ such that such that the set $Jf(z)$ is contained in the closed ball $L\cdot \overline{B}_{{\mathcal L}(X, Y)}$, for each $z\in B(x, r)$. Fix $u, v \in B(x, r)$ and $\varepsilon>0$. From Theorem \ref{MVT} we can find $T\in \co (Jf([u, v])) \subset L\cdot \overline{B}_{{\mathcal L}(X, Y)}$ such that
$$
\Vert f(v) - f(u) - T(v-u) \Vert < \varepsilon,
$$
and hence, $\Vert f(v) - f(u) \Vert \leq L \, \Vert v-u \Vert + \varepsilon$. In this way we see that $f$ is $L$-Lipschitz on $B(x, r)$. The converse follows from Example \ref{Loc-Lip}.
\end{proof}

\

In our next example, we provide some examples of locally {\tiny \emph{WOT}}-compact pseudo-Jacobians, which will be useful later.

\begin{ex}\label{Loc-WOT}
Suppose that the map $f:U\subset X\to Y$ admits a factorization of the form
$$
f= T \circ g: U \to Z \to Y,
$$
where $Z$ is a reflexive Banach space, $g:U\to Z$ is locally Lipschitz on $U$, and $T\in {\mathcal L}(Z, Y)$.  For each $x\in U$, define
$$
J_1f(x):= {\rm Lip} \, g (x) \cdot \{ T\circ S \, : \, S \in \overline{B}_ {{\mathcal L}(X, Z)}\},
$$
and
$$
J_2f(x):= \{ T\circ S \, : \, S \in \partial \, g (x)\},
$$
where $\partial  g (x)$ is the P\'ales-Zeidan generalized Jacobian of $g$ at $x$ (see Example \ref{Thibault}). Then $J_1 f(x)$ and $J_2 f(x)$ are convex and  {\tiny \emph{WOT}}-compact pseudo-Jacobians for $f$ at $x$. Furthermore,  $J_1 f$ and $J_2 f$ are locally {\tiny \emph{WOT}}-compact pseudo-Jacobian mappings for $f$ on $U$.
\end{ex}
\begin{proof}
As explained in Examples \ref{Loc-Lip} and \ref{Thibault}, we have that ${\rm Lip}\, g (x) \cdot \overline{B}_ {{\mathcal L}(X, Z)}$ and $\partial g (x)$ are pseudo-Jacobians for $g$ at $x\in U$. It follows from the very definition that $J_1f(x)$ and $J_2f(x)$ are then pseudo-Jacobians for $f$ at $x$.

Moreover, since $Z$ is reflexive, we know that $\overline{B}_ {{\mathcal L}(X, Z)}$ is a {\tiny \emph{WOT}}-compact subset of ${\mathcal L}(X, Z)$. In addition, as we have remarked before, the topology $\beta (X, V) = \beta (X, Z^*)$ considered in \cite{PZ-07} coincides with the {\tiny \emph{WOT}}-topology in ${\mathcal L}(X, Z)$ by the reflexivity of $Z$. Thus, from \cite[Theorem 3.8]{PZ-07} we have that the set $\partial  g (x)$ is also convex and {\tiny \emph{WOT}}-compact in ${\mathcal L}(X, Z)$. Therefore, since the map $S \mapsto T\circ S$ is  {\tiny \emph{WOT}}-to-{\tiny \emph{WOT}} continuous from ${\mathcal L}(X, Z)$ to ${\mathcal L}(X, Y)$, we obtain that the sets  $J_1f(x)$ and  $J_2f(x)$ are convex and {\tiny \emph{WOT}}-compact in ${\mathcal L}(X, Y)$. Finally, since $g$ is locally Lipschitz, for each $x\in U$ there exist constants $r>0$ and $C>0$ such that for every $z\in B(x, r)$, $J_1f(z)$ and $J_2f(z)$ are contained in the set
$$
C\cdot \{ T\circ S \, : \, S \in \overline{B}_ {{\mathcal L}(X, Z)}\}.
$$
In this way we see that  $J_1 f$ and $J_2 f$ are locally {\tiny \emph{WOT}}-compact pseudo-Jacobian mappings for $f$ on $U$.
\end{proof}

\

We next give an optimality condition in terms of pseudo-Jacobians.

\begin{thm}[Optimality condition]\label{optimality}
Consider a real-valued function $\varphi:U\subset X \to \R$. Suppose that $\varphi$ attains a local extremum at a point $x_0\in U$, and  $J\varphi (x_0)$ is a pseudo-Jacobian for $\varphi$ at $x_0$. Then,
$$
0\in \overline{co}^{w^*} \bigl(J\varphi(x_0)\bigr).
$$
\end{thm}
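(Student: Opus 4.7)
The plan is to use a Hahn-Banach separation argument in the weak-$*$ topology of $X^*$, after extracting the information the pseudo-Jacobian provides at a local extremum. Without loss of generality, I assume $x_0$ is a local minimum (if it is a local maximum, apply the result to $-\varphi$, noting from the remarks after Definition~\ref{pseudo-jacobian} that $-J\varphi(x_0)$ is a pseudo-Jacobian of $-\varphi$ at $x_0$, and that $0 \in \overline{\text{co}}^{w^*}(-J\varphi(x_0))$ iff $0\in \overline{\text{co}}^{w^*}(J\varphi(x_0))$).

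Since $x_0$ is a local minimum, for every $v\in X$ and every sufficiently small $t>0$ we have $\varphi(x_0+tv)-\varphi(x_0)\geq 0$, and hence
$$
\varphi'_{+}(x_0;\,v) \;=\; \limsup_{t\to 0^+} \frac{\varphi(x_0+tv)-\varphi(x_0)}{t}\;\geq\; 0.
$$
Applying the pseudo-Jacobian inequality \eqref{pd-scalar} to the real-valued function $\varphi$, this yields
$$
\sup_{x^*\in J\varphi(x_0)} x^*(v) \;\geq\; \varphi'_{+}(x_0;\,v) \;\geq\; 0 \quad \text{for every } v\in X. \tag{$\ast$}
$$

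Now I argue by contradiction. Assume $0\notin C:=\overline{\text{co}}^{w^*}\bigl(J\varphi(x_0)\bigr)$. Then $C$ is a nonempty, convex, weak-$*$ closed subset of $X^*$, and the singleton $\{0\}$ is weak-$*$ compact and disjoint from $C$. By the Hahn-Banach strict separation theorem applied in the locally convex space $(X^*, w^*)$, there exist a weak-$*$ continuous linear functional $\Lambda$ on $X^*$ and a constant $\gamma\in\mathbb{R}$ strictly separating $0$ and $C$. Since the weak-$*$ continuous linear functionals on $X^*$ are exactly the point evaluations $x^* \mapsto x^*(v)$ with $v\in X$, there exists $v\in X$ such that either
$$
x^*(v) > \gamma > 0 \quad \text{for all } x^*\in C, \qquad \text{or} \qquad x^*(v) < \gamma < 0 \quad \text{for all } x^*\in C.
$$
In the first case, taking $w = -v$ we obtain
$$
\sup_{x^*\in J\varphi(x_0)} x^*(w) \;=\; -\inf_{x^*\in J\varphi(x_0)} x^*(v) \;\leq\; -\gamma \;<\; 0,
$$
contradicting $(\ast)$ with $v$ replaced by $w$. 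In the second case, $(\ast)$ is directly violated at the vector $v$. In either case we reach a contradiction, so $0\in C$, as claimed.

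The only nontrivial ingredient is the strict Hahn-Banach separation in $(X^*, w^*)$, which works cleanly because $\{0\}$ is compact; once that is in hand, the identification of weak-$*$ continuous functionals with point evaluations converts the separating functional into a vector $v\in X$ on which one can immediately test the pseudo-Jacobian inequality $(\ast)$.
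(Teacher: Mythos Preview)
Your proof is correct and follows essentially the same approach as the paper's: both reduce to the local-minimum case, observe that $\sup_{x^*\in J\varphi(x_0)} x^*(v)\geq 0$ for all $v\in X$, and then invoke Hahn--Banach separation in $(X^*,w^*)$ together with the identification of weak-$*$ continuous functionals with evaluations $x^*\mapsto x^*(v)$. The paper packages the separation step via the support function $\sigma_H$ of $H=\overline{\mathrm{co}}^{w^*}(J\varphi(x_0))$ (showing $0\leq \sigma_H(v)$ for all $v$ forces $0\in H$), while you run the equivalent direct contradiction argument; the content is the same.
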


\begin{proof}
It is enough to prove the statement when $\varphi$ attains a local minimum at $x_0$. Consider the set $H = \overline{co}^{w^*} \bigl(J\varphi(x_0)\bigr)$, that is, the $w^*$-closed convex hull of $J\varphi(x_0)$ in $X^*$. Let $\sigma_H : X \to (- \infty, +\infty]$ denote the corresponding support function of $H$, which is defined, for each $v\in X$, by
$$
\sigma_H (v) = \sup \left\{ \langle x^*, v \rangle \, : \, x^* \in H \right\}.
$$
Note that, given $x^* \in X^*$, we have that $x^* \in H$ if, and only if, $\langle x^*, v\rangle \leq \sigma_H (v)$ for every $v\in X$. Indeed, since $H$ is a convex and $w^*$-closed subset of $X^*$, by applying the Hahn-Banach separation theorem to the space $(X^*, w^*)$, and taking into account that the dual of this space coincides with $X$, we obtain that if $x^* \notin H$, then there exists some $v\in X$ such that $\langle x^*, v\rangle > \sigma_H (v)$.

Now, if $x_0$ is a local minimum of $\varphi$, we have that for each $v\in X$,
$$
0 \leq \varphi'_{+}(x_0; v)\leq \sup \left\{\langle x^*, v \rangle  \, : \, x^* \in J\varphi(x_0) \right\} \leq \sigma_H (v).
$$
Thus, $0\in H$.
\end{proof}

\begin{rem}
{\rm The set $\overline{co}^{w^*} \bigl(J\varphi(x_0)\bigr)$ in the previous statement is minimal, in the sense that we cannot replace the weak star closure by the closure in a stronger topology on $X^*$. Indeed, let $X$ be a dual Banach space, $X=Y^*$, take an element $\xi\in X^{*}=Y^{**}$, and consider the function $\varphi:X\to \mathbb R$ given by the formula
$$\varphi(x)= \|x\|-\left\langle \xi,\, x\right\rangle.$$
It is obvious that  $\min \limits_{x\in X} \varphi(x)=0=\varphi(0)$. Moreover, for every $v\in X$ we have
$$\frac{\varphi(tv)-\varphi(0)}{t}\to \|v\|-\left\langle \xi,\, v\right\rangle= \sup \left\{\left\langle y-\xi,\, v\right\rangle,\, y\in \overline{B}_Y\right\}, \quad \text{as} \quad t\to 0^+.$$
Thus, the set $J\varphi(0) := -\xi + \overline{B}_{Y}$ is a (norm-closed and convex) pseudo-Jacobian of $\varphi$ at $0$ (and hence, $0\in \overline{J\varphi(0)}^{w^*}$). However, if the space $X=Y^*$ is non-reflexive and $\xi\in Y^{**}\setminus Y$, then $0\not \in J\varphi(0)$.
\hfill $\square$
}
\end{rem}

\

Now, we shall establish some partial results concerning the chain rule for pseudo-Jacobians.
In the sequel, we consider Banach spaces $X$, $Y$ and $Z$, open subsets $U\subset X$ and $V\subset Y$, and continuous maps $f:U\to Y$ and $g:V\to Z$ with $f(U)\subset V$. If $Jf$ and $Jg$ are pseudo-Jacobian mappings for $f$ and $g$ on $U$ and $V$, respectively, and $x\in U$, then we denote by $Jg\left(f(x)\right)\circ Jf(x)$ the set made up of all compositions of the form $S\circ T$, where $S\in Jg\left(f(x)\right)$ and $T\in Jf(x).$ Our purpose is to find conditions to ensure that this set is a pseudo-Jacobian for $g\circ f$ at the point $x$. These conditions involve differentiability properties of $f$ or $g$ and boundedness or compactness properties of $Jg$ or $Jf$, respectively. The following is a first result in this direction.

\begin{prop}\label{chainrulebis} Let $f:U\to V$ and $g:V\to Z$ be continuous maps, let $Jg$ be a pseudo-Jacobian mapping of $g$ and $x\in U$. If
$f$ is G\^ateaux differentiable at $x$ and $Jg$ is locally bounded at $f(x)$, then the set
$Jg\left(f(x)\right)\circ f'(x)$
is a pseudo-jacobian of $g\circ f$ at $x$.
\end{prop}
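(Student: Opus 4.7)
The goal is to show that for every $z^{*}\in Z^{*}$ and every $v\in X$,
$$(z^{*}\circ g\circ f)'_{+}(x;\, v)\,\leq\, \sup\bigl\{\langle z^{*},\, S(f'(x)(v))\rangle\, :\, S\in Jg(f(x))\bigr\}.$$
Fix such $z^{*}$ and $v$, and set $y=f(x)$ and $w=f'(x)(v)$. The plan is to split the difference quotient of $z^{*}\circ g\circ f$ at $x$ in the direction $v$ into a ``main'' part, which feels only the linear behaviour $y+tw$, and a ``remainder'' part, which is absorbed by the local Lipschitz behaviour of $g$ near $y$.

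Since $f$ is G\^ateaux differentiable at $x$, we can write $f(x+tv)=y+tw+r(t)$ with $\|r(t)\|/|t|\to 0$ as $t\to 0$. Then, for every $t>0$ small,
$$\frac{1}{t}\bigl\langle z^{*},\, g(f(x+tv))-g(y)\bigr\rangle\,=\,\frac{1}{t}\bigl\langle z^{*},\, g(y+tw+r(t))-g(y+tw)\bigr\rangle\,+\,\frac{1}{t}\bigl\langle z^{*},\, g(y+tw)-g(y)\bigr\rangle.$$
I would next observe that because $Jg$ is locally bounded at $y$, Corollary~\ref{local-lipschitness} yields a neighborhood $B(y,\rho)$ of $y$ on which $g$ is $L$-Lipschitz for some $L>0$. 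For all sufficiently small $t>0$, both $y+tw$ and $y+tw+r(t)$ lie in $B(y,\rho)$, so
$$\left|\frac{1}{t}\bigl\langle z^{*},\, g(y+tw+r(t))-g(y+tw)\bigr\rangle\right|\,\leq\,\|z^{*}\|\cdot L\cdot\frac{\|r(t)\|}{t}\,\xrightarrow[t\to 0^{+}]{}\,0.$$
For the remaining term, applying the definition of pseudo-Jacobian to $g$ at $y$ with the direction $w$ gives
$$\limsup_{t\to 0^{+}}\frac{1}{t}\bigl\langle z^{*},\, g(y+tw)-g(y)\bigr\rangle\,=\,(z^{*}\circ g)'_{+}(y;\, w)\,\leq\,\sup\bigl\{\langle z^{*},\, S(w)\rangle\, :\, S\in Jg(y)\bigr\}.$$

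Putting these two estimates together yields the required inequality, since $S(w)=S(f'(x)(v))=(S\circ f'(x))(v)$ and $Jg(f(x))\circ f'(x)=\{S\circ f'(x):S\in Jg(y)\}$. The only genuinely delicate point in the argument is controlling the remainder term: this is where the hypothesis of local boundedness of $Jg$ is essential, since without it the Lipschitz estimate for $g$ near $y$ would not be available and the $o(t)$ perturbation supplied by G\^ateaux differentiability could not be neutralized.
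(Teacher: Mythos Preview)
Your proof is correct and follows essentially the same route as the paper: both split the difference quotient into a remainder term controlled by the local Lipschitz constant of $g$ near $f(x)$ (obtained via Corollary~\ref{local-lipschitness}) and a main term handled directly by the pseudo-Jacobian inequality for $g$ at $f(x)$ in the direction $f'(x)(v)$. The only cosmetic difference is notation: the paper writes $\Delta(t)=(f(x+tv)-f(x))/t$ and compares $g(f(x)+t\Delta(t))$ with $g(f(x)+tf'(x)(v))$, which is exactly your comparison of $g(y+tw+r(t))$ with $g(y+tw)$ since $r(t)=t(\Delta(t)-f'(x)(v))$.
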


\begin{proof}
We may assume that $U=X$ and $V=Y$. Pick $z^*\in Z^*$ and $v\in X$, and set, for each $t> 0$, $\Delta(t) = (f(x+tv)-f(x))/t$. Since $Jg$ is locally bounded at $f(x)$, Corollary
\ref{local-lipschitness} ensures the existence of constants $L,r> 0$ such that
$$\left\langle z^*,\, g\left(f(x)+y_1\right)-g\left(f(x)+y_2\right)\right\rangle\leq L\|y_1-y_2\|, \quad \text{whenever} \quad y_1,y_2\in r\overline{B}_Y.$$
Thus, if $t> 0$ is small enough, we have
$$\left\langle z^*,\, g\left(f(x)+t\Delta(t)\right)-g\left(f(x)+tf'(x)(v)\right)\right\rangle\leq Lt\|\Delta(t)-f'(x)(v)\|,$$ and consequently,
$$\begin{aligned} \left\langle z^*,\, \frac{g\left(f(x+tv)\right)-g\left(f(x)\right)}{t}\right\rangle &= \frac{1}{t}\left\langle z^*,\, g\left(f(x)+t \Delta(t)\right)-g\left(f(x)+tf'(x)(v)\right)\right\rangle\\ &+\left\langle z^*,\,\frac{g\left(f(x)+tf'(x)(v)\right)-g\left(f(x)\right)}{t}\right\rangle\\ &\leq L\|v\|\left\|\Delta(t)-f'(x)(v)\right\|\\ &+ \left\langle z^*, \frac{g\left(f(x)+tf'(x)(v)\right)-g\left(f(x)\right)}{t}\right\rangle.\end{aligned}$$
Since $f$ is G\^ateaux differentiable at $x$ we have $\|\Delta(t)-f'(x)(v)\|\to 0$ as $t\to 0$, and hence,
$$\begin{aligned} (z^*,\, g\circ f)'_{+}(x;\, v) &\leq \limsup_{t\to 0^+} \left\langle z^*, \frac{g\left(f(x)+tf'(x)(v)\right)-g\left(f(x)\right)}{t}\right\rangle\\ &\leq  \sup \left\{\left\langle z^*,\, \left(T\circ f'(x)\right)(v)\right\rangle:\, T\in Jg\left(f(x)\right)\right\}.\end{aligned}$$
\end{proof}

As a consequence of the former proposition, we obtain the following result.

\begin{cor}
Let $f:U\subset X\to Y$ and $g:Y\to Z$ be continuous maps and $x\in U$. If $f$ is G\^ateaux differentiable at $x$ and $g$ is weakly
G\^ateaux differentiable and locally Lipschitz at $f(x)$, then the map $g\circ f$ is weakly G\^ateaux differentiable at $x$, and $(g\circ f)'(x)=g'\left(f(x)\right)\circ f'(x)$.
\end{cor}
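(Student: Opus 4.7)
The plan is to reduce the statement to Proposition \ref{chainrulebis} combined with the singleton characterisation of weak G\^ateaux differentiability from Example \ref{wG}. Indeed, once we know that $g \circ f$ admits a pseudo-Jacobian at $x$ equal to the singleton $\{g'(f(x)) \circ f'(x)\}$, the converse direction in Example \ref{wG} immediately yields the weak G\^ateaux differentiability of $g \circ f$ at $x$ together with the claimed formula for the derivative.

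To apply Proposition \ref{chainrulebis}, I need a pseudo-Jacobian mapping $Jg$ on $Y$ whose value at $f(x)$ is precisely $\{g'(f(x))\}$ and which is locally bounded at $f(x)$. Since $g$ is $L$-Lipschitz on some ball $B(f(x), r)$, passing to the limit in the weak difference quotient immediately yields $\|g'(f(x))\| \leq L$. I would then define $Jg(f(x)) := \{g'(f(x))\}$ (a pseudo-Jacobian by Example \ref{wG}), $Jg(y) := L \cdot \overline{B}_{\mathcal{L}(Y,Z)}$ for $y \in B(f(x), r) \setminus \{f(x)\}$ (a pseudo-Jacobian by Example \ref{Loc-Lip}, since $\mathrm{Lip}\, g(y) \leq L$), and $Jg(y) := \mathcal{L}(Y, Z)$ for $y \notin B(f(x), r)$ (trivially a pseudo-Jacobian, as the supremum in \eqref{pd-1} is $+\infty$ whenever the left-hand side could be nonzero). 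Since every value of $Jg$ on $B(f(x), r)$ is contained in $L \cdot \overline{B}_{\mathcal{L}(Y,Z)}$, the mapping $Jg$ is locally bounded at $f(x)$.

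Proposition \ref{chainrulebis} then yields that $Jg(f(x)) \circ f'(x) = \{g'(f(x)) \circ f'(x)\}$ is a pseudo-Jacobian of $g \circ f$ at $x$, and the desired conclusion follows from the converse part of Example \ref{wG}. There is no genuinely delicate step: the entire argument is a packaging of Proposition \ref{chainrulebis} with the two characterisations already established. The only care required is the bookkeeping that promotes the pointwise data (the weak derivative at $f(x)$ and the local Lipschitz constant) to a globally defined pseudo-Jacobian mapping on $Y$ that is a singleton at $f(x)$ and bounded in a neighbourhood.
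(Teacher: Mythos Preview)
Your proposal is correct and follows essentially the same approach as the paper: invoke Proposition \ref{chainrulebis} to obtain a singleton pseudo-Jacobian $\{g'(f(x))\circ f'(x)\}$ for $g\circ f$ at $x$, then appeal to the converse part of Example \ref{wG}. The only difference is that you spell out explicitly the construction of a globally defined pseudo-Jacobian mapping $Jg$ that is locally bounded at $f(x)$ and equals $\{g'(f(x))\}$ at that point, whereas the paper simply says ``the previous proposition guarantees'' the singleton pseudo-Jacobian and leaves this bookkeeping implicit.
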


\begin{proof}
The previous proposition guarantees that the map $g\circ f$ admits a psuedo-Jacobian at $x$ made up of a unique operator, namely,
$g'\left(f(x)\right)\circ f'(x)$. Thus, according to Example \ref{wG} we have that $g\circ f$ is weakly G\^ateaux differentiable at $x$, and $(g\circ f)'(x)= g'\left(f(x)\right)\circ f'(x)$.
\end{proof}

Next, we establish a chain rule for the composition $g\circ f$ assuming differentiability properties of  $g$. If $g:V\to Z$  is weakly G\^ateaux differentiable on all of $V$, we say that the derivative $g'$ is \emph{Hadamard continuous} (resp.  \emph{weak Hadamard continuous}) at a point $y_0\in V$, if
for every compact (resp.  weakly compact) subset $K\subset Y$, we have $\sup_{h\in K} \left\|\left\langle g'(y)-g'(y_0),\, h\right\rangle\right\| \to 0$ as $y\to y_0.$ To simplify the statement of our next result, it is also convenient to introduce the following definition.

\begin{defn}
The pseudo-Jacobian mapping $Jf:U\to 2^{\mathcal{L}(X,Y)}$ is called \emph{directionally bounded} (resp. \emph{directionally  weakly compact}, resp. \emph{directionally compact}) at a point $x\in U$, if for every $v\in X$ there exists $r> 0$ such that
the set
$$
Jf\left([x,\, x+rv]\right) (v) := \left\{T(v) \, : \, T\in Jf(z),\, z\in [x,\, x+rv]\right\}
$$
is bounded (resp. relatively weakly compact, resp. relatively norm-compact) in the space $Y$. If this property holds for every $x\in U$, then we say that the pseudo-Jacobian mapping $Jf$ is directionally bounded (resp. directionally weakly compact, resp. directionally compact) on $U$.
\end{defn}

It is clear that if the pseudo-Jacobian mapping $Jf:U\to 2^{\mathcal{L}(X,Y)}$ is  locally bounded (resp. locally  {\tiny \emph{WOT}}-compact, resp. locally {\tiny \emph{SOT}}-compact) on $U$,  then $Jf$ is directionally bounded (resp. directionally weakly compact, resp. directionally compact) on $U$.

\begin{thm}\label{chainrule}
Let $f:U\to V$ be a continuous map and $Jf$  a pseudo-Jacobian mapping of $f$ on $U$. Suppose that  $g:V\to Z$ is a continuous and weakly G\^ateaux differentiable map, and consider $x\in U$. Then, the set  $g'\left(f(x)\right)\circ Jf(x)$ is a pseudo-Jacobian of $g\circ f$ at $x$ if one of the following conditions hold:
\begin{enumerate}
\item[(i)] $Jf$ is directionally bounded at $x$ and $g'$ is norm-continuous at $f(x)$.
\item[(ii)] $Jf$ is directionally compact at $x$ and $g'$ is Hadamard continuous at $f(x)$.
\item[(iii)] $Jf$ is directionally weakly compact at $x$ and $g'$ is weak Hadamard continuous at $f(x)$.
\end{enumerate}
\end{thm}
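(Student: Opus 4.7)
Fix $z^*\in Z^*$, $v\in X$ and $x\in U$. The goal is to verify the defining inequality
$$(z^*\circ g\circ f)'_{+}(x;v)\leq \sup\bigl\{\langle z^*,\, g'(f(x))\,T(v)\rangle:\, T\in Jf(x)\bigr\}.$$
My plan is to apply the Mean Value Theorem \ref{MVT} twice: once to $g$ (which has $\{g'\}$ as a pseudo-Jacobian by Example \ref{wG}) to unfold the composition, and once to $f$ to control the finite difference $y_t:=f(x+tv)-f(x)$.

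\emph{Unfolding step.} For small $t>0$, applying Theorem \ref{MVT} to $g$ on the segment $[f(x),f(x+tv)]$ gives
$$\langle z^*,\, g(f(x+tv))-g(f(x))\rangle \leq \sup_{\xi\in[f(x),f(x+tv)]}\langle z^*,\, g'(\xi)(y_t)\rangle.$$
Splitting $g'(\xi)=g'(f(x))+\bigl(g'(\xi)-g'(f(x))\bigr)$, dividing by $t$, and using $y_t=t\cdot(y_t/t)$, yields
$$\frac{\langle z^*, g(f(x+tv))-g(f(x))\rangle}{t}\leq \bigl\langle g'(f(x))^{*}z^{*},\, y_t/t\bigr\rangle+E(t),$$
where $E(t):=\sup_{\xi\in[f(x),f(x+tv)]}\bigl\langle z^*,(g'(\xi)-g'(f(x)))(y_t/t)\bigr\rangle$.

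\emph{Main term.} Taking $\limsup_{t\to 0^{+}}$ of the first summand recovers exactly $(g'(f(x))^{*}z^{*}\circ f)'_{+}(x;v)$, which is controlled by the pseudo-Jacobian property of $Jf$ applied to the functional $g'(f(x))^{*}z^{*}\in Y^{*}$; this is precisely the desired bound.

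\emph{Error term.} The remaining work is to show $E(t)\to 0$ as $t\to 0^{+}$. Applying Theorem \ref{MVT} to $f$ on $[x,x+tv]$ gives
$$y_t/t\in \overline{\co}\bigl(Jf([x,x+tv])(v)\bigr).$$
By continuity of $f$, the segment $[f(x),f(x+tv)]$ shrinks uniformly to $f(x)$, so it suffices to bound the error over $\xi$ in a small neighbourhood of $f(x)$, evaluated against elements of
$$K:=\overline{\co}\bigl(Jf([x,x+rv])(v)\bigr),$$
where $r$ is supplied by the hypothesis on $Jf$ (and is such that $[x,x+rv]\subset U$).

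\emph{Case analysis.} In case (i), $Jf([x,x+rv])(v)$ is bounded, hence $K$ is bounded, and $|E(t)|\leq \|z^*\|\cdot\|g'(\xi)-g'(f(x))\|_{\mathrm{op}}\cdot\sup_{w\in K}\|w\|\to 0$ by norm continuity of $g'$. In case (ii), $Jf([x,x+rv])(v)$ is relatively norm-compact, so by Mazur's theorem $K$ is norm-compact, and Hadamard continuity of $g'$ at $f(x)$ forces $\sup_{w\in K}\|(g'(\xi)-g'(f(x)))(w)\|\to 0$ as $\xi\to f(x)$. In case (iii), $Jf([x,x+rv])(v)$ is relatively weakly compact, so by Krein's theorem $K$ is weakly compact, and weak Hadamard continuity of $g'$ gives the same conclusion.

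The main technical step is recognizing that the increments $y_t/t$ remain trapped in a set of the appropriate ``size'' (bounded, compact, weakly compact) -- this is exactly what the three notions of directional boundedness/compactness are designed to produce once combined with the Mean Value Theorem and the Mazur/Krein closure theorems. Everything else is an essentially linear calculation.
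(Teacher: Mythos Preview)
Your proposal is correct and follows essentially the same route as the paper: apply the Mean Value Theorem to $g$ to split off the linear part $g'(f(x))$, use the pseudo-Jacobian property of $Jf$ (with the functional $g'(f(x))^{*}z^{*}$) for the main term, and apply Theorem~\ref{MVT} to $f$ to trap $y_t/t$ in $\overline{\co}\bigl(Jf([x,x+rv])(v)\bigr)$ so that the error term vanishes under each of (i)--(iii) via norm, Mazur, or Krein closure. The only cosmetic differences are that the paper first passes to a sequence $(t_n)$ realizing the $\limsup$ and asserts a single mean-value point $y_n\in[f(x),f(x+t_nv)]$ with $g(f(x+t_nv))-g(f(x))=g'(y_n)(y_{t_n})$, whereas you keep the $\sup_{\xi}$ coming from the convex-hull form of Theorem~\ref{MVT}; your formulation is arguably more faithful to what Theorem~\ref{MVT} actually delivers, and the subsequent estimates are identical.
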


\begin{proof}
We can assume that $U=X$ and $V=Y$. Pick a functional $z^*\in Z^*$ and a vector $v\in X$, and let
$(t_n)_n\subset (0,\infty)$ such that $t_n\to 0$, and
$$
\lim_n \left\langle z^*,\, \frac{g\left(f(x+t_nv)\right)-g\left(f(x)\right)}{t_n} \right\rangle= (z^*\circ g\circ f)'_{+}(x;\, v).
$$
According to Theorem \ref{MVT}, there exists a sequence $(y_n)_n\subset Y$ such that $y_n\in \left[f(x),\, f(x+t_n v)\right]$ and
$$
g(f(x+t_n v))-g(f(x)) = g'(y_n)\left(f(x+t_n v)-f(x)\right), \quad \text{for all} \quad n\in \mathbb N.
$$ Let us write, for each $n\in \mathbb N$, $\Delta_n = \frac{1}{t_n}\left(f(x+t_n v)-f(x)\right)$. Then,
\begin{equation}
\left\langle z^*,\, \frac{g\left(f(x+t_n v)\right)-g\left(f(x)\right)}{t_n} \right\rangle = \left\langle z^*,\, \left(g'(y_n)-g'(f(x))\right)\left(\Delta_n\right)\right\rangle+\left\langle z^*\circ g'(f(x)),\, \Delta_n\right\rangle,\nonumber
\end{equation}
and bearing in mind that $Jf(x)$ is a pseudo-Jacobian of $f$ at $x$, it follows that
$$
\begin{aligned} \limsup_n \left\langle z^*,\, \frac{g\left(f(x+t_n v)\right)-g\left(f(x)\right)}{t_n} \right\rangle &\leq \limsup_n \left\langle z^*,\, \left(g'(y_n)-g'(f(x))\right)\left(\Delta_n\right)\right\rangle\\ &+\limsup_n \left\langle z^*\circ g'(f(x)),\, \Delta_n\right\rangle\\ &\leq \limsup_n \left\langle z^*,\, \left(g'(y_n)-g'(f(x))\right)\left(\Delta_n\right)\right\rangle\\ &+\sup_{T\in Jf(x)} \left\langle z^*\circ g'(f(x))\circ T, v\right\rangle.
\end{aligned}
$$
Thus, it suffices to show that
\begin{equation}
\limsup_n \left|\left\langle z^*,\, \left(g'(y_n)-g'(f(x))\right)\left(\Delta_n\right)\right\rangle\right|=0.\label{chainrule-eq1}
\end{equation}
Notice that, because of Theorem \ref{MVT}, we have
\begin{equation}
\Delta_n\in \overline{co}\, \bigl(Jf([x,\, x+t_n v])(v)\bigr).\label{chainrule-eq2}
\end{equation}
Suppose first that condition $(i)$ holds. Then, there exists a number $r> 0$ such that the set $Jf([x,\, x+rv])(v)$ is bounded in $Y$, and therefore $K=\overline{co}\, \bigl(Jf([x,\, x+rv])(v)\bigr)$ is bounded as well. Since $t_n \to 0$, by \eqref{chainrule-eq2} we get $\Delta_n \in K$ for $n$ large enough. Moreover, as $g$ is continuously Fr\'echet differentiable at $f(x)$ it follows that $\left\|g'(y_n)-g'(f(x))\right\|\to 0$. Thus,
$$
\left\|\left\langle g'(y_n)-g'(f(x)),\, \Delta_n\right\rangle \right\|\to 0.
$$
This proves \eqref{chainrule-eq1} under condition $(i)$.

Now, suppose that assertion $(ii)$ is satisfied. There exists $r> 0$ such that the set $Jf([x,\, x+rv])(v)$  is relatively norm-compact in $Y$. Hence, the set $K=\overline{co}\, \bigl(Jf([x,\, x+rv])(v)\bigr)$ is norm-compact in $Y$. Using again that  $t_n \to 0$ and \eqref{chainrule-eq2}, we have  that for $n$ large enough,  $\Delta_n \in K$,  and therefore
$$
\begin{aligned} \left|\left\langle z^*,\, \left(g'(y_n)-g'(f(x))\right)\left(\Delta_n\right)\right\rangle\right|&\leq \sup_{h\in K} \left|\left\langle z^*,\, \left(g'(y_n)-g'(f(x))\right)(h)\right\rangle\right|\\ &\leq \|z^*\|\sup_{h\in K} \left\|\left\langle g'(y_n)-g'(f(x)),\, h\right\rangle\right\|.\end{aligned}
$$
Since  $g'$ is Hadamard continuous at $f(x)$, it follows that
$$
\sup_{h\in K} \left\|\left\langle g'(y_n)-g'(f(x)),\, h\right\rangle\right\|\to 0.
$$ This and the former inequality imply \eqref{chainrule-eq1}.

Finally, the proof of our claim under hypothesis $(iii)$ is completely analogous, taking into account that, by Krein-Smulian theorem, the closed convex hull of a weakly compact set in a Banach space is weakly compact as well.

\end{proof}

\

We end this section by introducing a property of pseudo-Jacobian mappings which will be useful to achieve our results concerning invertibility of nonsmooth maps in the next section. This property is based on the validity of the chain rule for the composition with distance functions,  together with a (rather technical) stability condition on the pseudo-Jacobian of such compositions. In what follows, given a pseudo-Jacobian mapping $Jf$ for a map $f:U\subset X\to Y$, and two points $x\in U$ and $y\in Y$ with $y\neq f(x)$, we will denote by $A_{x,y}(f)$ the subset of $X^*$ defined by the formula
$$A_{x,y}(f)= \partial d_y\left(f(x)\right)\circ \overline{co} \left(Jf(x)\right),
$$
where the symbol $d_y$ stands for the distance function to $y$, that is $d_y(v)=\|v-y\|, \, v\in Y$.

\

\begin{defn}\label{crc}
Let $f:U\subset X\to Y$ be a continuous map and $Jf$ a pseudo-Jacobian mapping for $f$ on $U$. We say that $Jf$ satisfies the chain rule condition on $U$ if, for each $x\in U$ and each $y\in Y$ with $y\neq f(x)$, the set $A_{x,y}(f)$ is a $w^*$-closed and convex pseudo-Jacobian of the function $d_y\circ f$ at the point $x$.
\end{defn}

The following result provides a first example of pseudo-Jacobian satisfying the former condition.

\begin{prop}\label{crc-ex1}
If $f:U\subset X\to Y$  is continuous and G\^ateaux differentiable on all of $U$, then the pseudo-Jacobian $Jf(x) = \{f'(x)\},\, x\in U$, satisfies the chain rule condition.
\end{prop}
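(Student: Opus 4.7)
The plan is to reduce the chain rule condition to a direct computation at the level of scalar-valued maps, using the convexity and Lipschitz character of $d_y$ together with the Gâteaux differentiability of $f$. First, since $Jf(x) = \{f'(x)\}$ is a singleton, I have $\overline{co}(Jf(x)) = \{f'(x)\}$, so
$$A_{x,y}(f) = \bigl(f'(x)\bigr)^{*}\bigl(\partial d_y(f(x))\bigr) = \{y^{*}\circ f'(x) : y^{*}\in \partial d_y(f(x))\}.$$
Because $d_y$ is $1$-Lipschitz and convex, its Clarke subdifferential coincides with its convex subdifferential, and in particular $\partial d_y(f(x))$ is nonempty, convex and $w^{*}$-compact in $Y^{*}$. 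The adjoint map $(f'(x))^{*}:Y^{*}\to X^{*}$ is linear and $w^{*}$-to-$w^{*}$ continuous, so $A_{x,y}(f)$ is the image of a $w^{*}$-compact convex set under such a map, and is therefore convex and $w^{*}$-compact, in particular $w^{*}$-closed.

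Next I would verify inequality \eqref{pd-scalar} for $\varphi := d_y\circ f$ at $x$. Fix $v\in X$. By Gâteaux differentiability of $f$, one can write $f(x+tv) = f(x) + t f'(x)(v) + r(t)$ with $\|r(t)\|/t \to 0$ as $t\to 0$. The reverse triangle inequality, applied to $d_y$, then yields
$$\left|\frac{\varphi(x+tv)-\varphi(x)}{t} - \frac{\|f(x)+tf'(x)(v)-y\| - \|f(x)-y\|}{t}\right| \leq \frac{\|r(t)\|}{t} \longrightarrow 0.$$
Since $d_y$ is convex, the second difference quotient converges, as $t\to 0^{+}$, to the one-sided directional derivative $d_y'(f(x);\, f'(x)(v))$. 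Hence the one-sided limit of $(\varphi(x+tv)-\varphi(x))/t$ exists, and
$$\varphi'_{+}(x;v) = \varphi'_{-}(x;v) = d_y'\bigl(f(x);\, f'(x)(v)\bigr).$$

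Finally, by the classical formula for the directional derivative of a continuous convex function with $w^{*}$-compact subdifferential,
$$d_y'\bigl(f(x);\, f'(x)(v)\bigr) = \max_{y^{*}\in \partial d_y(f(x))} \bigl\langle y^{*},\, f'(x)(v)\bigr\rangle = \sup_{x^{*}\in A_{x,y}(f)}\langle x^{*},\, v\rangle.$$
In particular, $\inf_{x^{*}\in A_{x,y}(f)}\langle x^{*},v\rangle \leq \varphi'_{-}(x;v) \leq \varphi'_{+}(x;v) \leq \sup_{x^{*}\in A_{x,y}(f)}\langle x^{*},v\rangle$, which is exactly \eqref{pd-scalar}. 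No step is a serious obstacle; the only piece requiring care is ensuring that the norm-Gâteaux remainder $r(t) = o(t)$ in norm is absorbed by the $1$-Lipschitz estimate for $d_y$, which is what allows the difference quotient for $\varphi$ to be replaced by a convex-analytic computation purely at the point $f(x)$.
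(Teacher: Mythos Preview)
Your argument is correct. The identification of $A_{x,y}(f)$ as the image of the $w^*$-compact convex set $\partial d_y(f(x))$ under the $w^*$-$w^*$ continuous adjoint $(f'(x))^*$ is the same as in the paper, and the verification that this set is a pseudo-Jacobian of $d_y\circ f$ is valid as written.

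The route, however, differs from the paper's. The paper simply invokes Proposition~\ref{chainrulebis}: since $d_y$ is Lipschitz, its Clarke subdifferential $\partial d_y$ is a locally bounded pseudo-Jacobian, and the smooth--nonsmooth chain rule gives directly that $\partial d_y(f(x))\circ f'(x)$ is a pseudo-Jacobian of $d_y\circ f$. You instead carry out the computation by hand, using the G\^ateaux remainder estimate together with the $1$-Lipschitz bound on $d_y$ to reduce to the convex directional derivative $d_y'(f(x);f'(x)(v))$, and then the max formula over the subdifferential. In effect you are inlining the Lipschitz estimate that drives the proof of Proposition~\ref{chainrulebis}, and then exploiting the additional convexity of $d_y$ to obtain that the one-sided directional derivative of $\varphi$ actually \emph{exists} (not merely that its $\limsup$ is controlled). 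This buys a self-contained argument independent of Proposition~\ref{chainrulebis}, at the cost of repeating its key step; the paper's version is shorter but relies on that earlier result.
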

\begin{proof}
Let us fix $x\in U$ and let $y\in Y$ with $y\neq f(x)$. Since $f$ is G\^ateaux differentiable we have
$$
A_{x,y}(f) =  \partial \|\cdot\|(f(x)-y)\circ f'(x),
$$
and bearing in mind that $\partial \|\cdot\|(f(x)-y)$ is a $w^*$-compact convex subset of $Y^*$ (c.f. \cite[p. 7]{Ph}), it follows that $A_{x,y}(f)$ is $w^*$-compact and convex as well. In particular, the set $A_{x,y}(f)$ is $w^*$-closed and convex. Further, since the norm $\|\cdot\|$ is Lipschitz, we have that $\partial \|\cdot\|$ is locally bounded at the point $f(x)-y$. Applying Proposition \ref{chainrulebis} we deduce that $A_{x,y}(f)$ is a pseudo-Jacobian of the function $d_y\circ f$ at $x$.
\end{proof}

As a consequence of Theorem \ref{chainrule} we obtain our next result, which provides examples of pseudo-Jacobian mappings satisfying the chain rule condition under some compactness assumptions on such pseudo-Jacobians and smoothness properties of the norm of the target space.

\begin{cor}\label{crc-ex}
Let $f:U\subset X\to Y$ be a continuous map, and let $Jf$ be a pseudo-Jacobian mapping for $f$ on $U$ such that, for every $x\in U$, the set $Jf(x)$ is convex and {\tiny \emph{WOT}}-compact. Then, $Jf$ satisfies the chain rule condition in any of the following cases:
\begin{enumerate}
\item[(i)] The norm of $Y$ is Fr\'echet smooth and for every $x\in U$, $Jf$ is directionally bounded at $x$.
\item[(ii)] The norm of $Y$ is G\^ateaux smooth and for every $x\in U$, $Jf$ is directionally compact at $x$.
\item[(iii)] The norm of $Y$ is weak Hadamard smooth and for every $x\in U$, $Jf$ is directionally weakly compact at $x$.
\end{enumerate}
\end{cor}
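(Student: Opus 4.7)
The plan is to invoke Theorem \ref{chainrule} with $g=d_y$, using the fact that $f(x)-y\neq 0$ together with the smoothness hypothesis on the norm of $Y$ to guarantee that $d_y$ is Gâteaux differentiable near $f(x)$ and that its derivative has the required continuity (norm, Hadamard, or weak Hadamard). The convexity and WOT-compactness of $Jf(x)$ will then be exactly what is needed to upgrade the output of Theorem \ref{chainrule} to a $w^*$-closed convex set.

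Fix $x\in U$ and $y\in Y$ with $y\neq f(x)$; then $V:=Y\setminus\{y\}$ is an open neighborhood of $f(x)$, and by continuity of $f$ we may replace $U$ by a smaller open neighborhood of $x$ with $f(U)\subset V$. Under each of the three smoothness assumptions on the norm of $Y$, the function $d_y(v)=\|v-y\|$ is Gâteaux differentiable throughout $V$, because the norm is Gâteaux differentiable on $Y\setminus\{0\}$. In particular $\partial d_y(f(x))=\{d_y'(f(x))\}$ is a singleton (see \cite[p.~7]{Ph}).

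Next I would verify, case by case, that $d_y'$ satisfies the continuity hypothesis required by the corresponding clause of Theorem \ref{chainrule}. In case (i), Šmulyan's theorem for convex functions identifies Fréchet differentiability of the norm with norm-to-norm continuity of its subdifferential on $Y\setminus\{0\}$, so $d_y'$ is norm continuous at $f(x)$. In case (ii), Gâteaux smoothness gives norm-to-$w^*$ continuity of $\|\cdot\|'$; since $\|d_y'(v)\|\leq 1$ on $V$, and a uniformly bounded $w^*$-convergent net converges uniformly on norm-compact subsets (standard $\varepsilon/2$ covering argument), $d_y'$ is Hadamard continuous at $f(x)$. In case (iii), weak Hadamard smoothness of the norm translates, by the analogous characterization, into weak Hadamard continuity of $d_y'$ at $f(x)$. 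Combined with the directional boundedness, compactness, or weak compactness of $Jf$ at $x$ assumed in the respective clause, Theorem \ref{chainrule} applies and yields that the set $\{d_y'(f(x))\}\circ Jf(x)$ is a pseudo-Jacobian of $d_y\circ f$ at $x$.

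It remains to match this set with $A_{x,y}(f)$ and to check the extra structure. Since $Jf(x)$ is WOT-compact and convex, it is WOT-closed; but WOT-closed convex subsets of $\mathcal{L}(X,Y)$ are automatically norm-closed (a norm-Cauchy sequence in such a set converges in WOT, and the two limits coincide), so $\overline{co}(Jf(x))=Jf(x)$ and therefore $A_{x,y}(f)=\{d_y'(f(x))\}\circ Jf(x)$, which is the pseudo-Jacobian just produced. Convexity of $A_{x,y}(f)$ is immediate from the linearity of left composition with the fixed operator $d_y'(f(x))$. For $w^*$-closedness, observe that the map $\Phi:\mathcal{L}(X,Y)\to X^*$ given by $\Phi(T)=d_y'(f(x))\circ T$ is WOT-to-$w^*$ continuous, because for each $v\in X$ one has $\langle\Phi(T),v\rangle=\langle d_y'(f(x)),Tv\rangle$, and this depends WOT-continuously on $T$ by the very definition of the weak operator topology. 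Consequently $A_{x,y}(f)=\Phi(Jf(x))$ is $w^*$-compact, hence $w^*$-closed, finishing the verification of the chain rule condition.

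The main obstacle I expect is in case (ii), where one must bridge the gap between the pointwise (Gâteaux) smoothness of the norm and the uniform-on-compacta (Hadamard) continuity of its derivative; this relies on Šmulyan together with the boundedness of $d_y'$, but both are standard once one remembers that $w^*$-convergence of uniformly bounded nets is uniform on totally bounded sets. Case (iii) requires the analogous but less classical identification of weak Hadamard smoothness with weak Hadamard continuity of the derivative, which again is a convex-analysis fact that must be invoked rather than reproved.
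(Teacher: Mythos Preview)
Your proposal is correct and follows essentially the same route as the paper's proof: both reduce to Theorem \ref{chainrule} applied to $g=d_y$, use the smoothness of the norm to get a singleton subdifferential and the required continuity of $d_y'$ (via \v{S}mulyan in case (i), norm-to-$w^*$ continuity plus boundedness in case (ii), and the definition in case (iii)), and deduce $w^*$-compactness of $A_{x,y}(f)$ from the {\tiny \emph{WOT}}-compactness and convexity of $Jf(x)$. You are simply more explicit than the paper in justifying $\overline{\co}(Jf(x))=Jf(x)$ and in spelling out the {\tiny \emph{WOT}}-to-$w^*$ continuity of $T\mapsto d_y'(f(x))\circ T$ and the passage from $w^*$-convergence to Hadamard continuity in case (ii).
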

\begin{proof}
Pick $x\in U$, and let $y\in Y$ with $f(x)\neq y$. By the smoothness of the norm, there is a unique norm-one functional
$y^{\ast}\in Y^{\ast}$ such that $\left\langle y^{\ast}, f(x) - y\right\rangle = d_y\left(f(x)\right)$. Thus,
$$
A_{x,y}(f)= y^*\circ Jf(x),
$$
and taking into account that $Jf(x)$ is convex and {\tiny \emph{WOT}}-compact it follows that $A_{x,y}(f)$ is a convex and $w^*$-compact subset of $X^*$.

It remains to show that $A_{x,y}(f)$ is a pseudo-Jacobian for the function $d_y\circ f$ at the point $x$. Assume first that the norm $\|\cdot\|$ on $Y$ is Fr\'echet smooth. According to a classical result by Smulyan (see e.g. \cite[Corollary 7.22]{Fabian}) it follows that the derivative of $\|\cdot\|$ is norm-to-norm continuous on $Y\setminus \{0\}$. Therefore, thanks to  Theorem \ref{chainrule} (i), we have that the set $y^*\circ Jf(x)$ is a pseudo-Jacobian of $d_y\circ f$ at $x$. The proof under the other conditions is similar. If the norm of $Y$ is G\^ateaux smooth, then its derivative is norm-to-weak$^{*}$ continuous on $Y\setminus \{0\}$. An easy argument yields then that $\|\cdot\|'$ is Hadamard continuous on $Y\setminus \{0\}$, and Theorem \ref{chainrule} (ii) applies. Finally, if the norm on $Y$ is weak Hadamard smooth, then its derivative is weak Hadamard continuous on $Y\setminus \{0\}$, and we can use Theorem \ref{chainrule} (iii).

\end{proof}

\section{Inversion of nonsmooth maps}

Our main aim in this section is to establish several criteria to guarantee that a continuous map $f$ from an open subset set of a Banach space into a Banach space is (locally or globally) invertible when a pseudo-Jacobian mapping of $f$ satisfying the chain rule condition is at hand. We first obtain an open mapping theorem, using the {\it surjectivity index} introduced by Rabier in \cite{Rabier}. Recall that if $T: X \to Y$ is a bounded linear operator, then the \emph{co-norm} of $T$ is the number
$$/\hspace{-0.9mm}/T/\hspace{-0.9mm}/= \inf_{\|x\|=1} \|Tx\|.$$
The \emph{surjectivity index} of $T$ is defined as $$\nu(T) = /\hspace{-0.9mm}/\, T^*/\hspace{-0.9mm}/,$$ where $T^*:Y^*\to X^*$ denotes the adjoint operator of $T$. It is clear that $T$ is surjective if, and only if, $\nu(T)> 0$. Observe also that if $T$ has a right inverse $R$
(that is, if there exists an operator $R\in \mathcal{L}(Y,X)$ such that $T\circ R = {\rm Id}_Y$), then
$$1 =\nu(R^*\circ T^*) = \inf_{y^*\in S_{Y^*}} \sup_{y\in S_Y} \left\|\left\langle T^* y^*, R y\right\rangle\right\|\leq \|R\|\inf_{y^*\in S_{Y^*}} \|T^* y^*\| = \|R\| \nu(T),$$ and therefore, $\nu(T) \geq \|R\|^{-1}$.
Analogously, if $T$ admits a left inverse $L: Y \to X$, then
$$1 = \sup_{x^*\in S_{X^*}} \left\|T^*(L^* x^*)\right\|\geq /\hspace{-0.9mm}/\, T^*/\hspace{-0.9mm}/ \sup_{x^*\in S_{X^*}} \|L^* x^*\| = \nu(T) \|L^*\|= \nu(T) \|L\|,$$
that is, $\nu(T) \leq \|L\|^{-1}$. In particular, for any isomorphism $T:X\to Y$ we have $$\nu(T) = \left\|T^{-1}\right\|^{-1} = /\hspace{-0.9mm}/T/\hspace{-0.9mm}/.$$
Our open mapping theorem reads as follows.
\begin{thm}\label{main-open}
Let $U$ be an open subset of $X$, let $f:U\to Y$ be a continuous map and $Jf$ a pseudo-Jacobian mapping for $f$ on $U$ satisfying the chain rule condition. Suppose that
$$
\alpha:=\inf \{ \nu(T) \,  : \, T\in  \co \, Jf(x); \, x \in U \}> 0.
$$
Then, for each open ball $B(x_0,\, \delta)\subset U$ we have
\begin{equation}
B\bigl(f(x_0),\, \delta \alpha\bigr)\subset f\bigl(\B(x_0,\, \delta)\bigr).\label{open}
\end{equation}
Therefore,  the set $V:=f(U)$ is open in $Y$, and $f$ is an open map from $U$ to $V$.
\end{thm}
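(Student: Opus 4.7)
The plan is a contradiction argument via Ekeland's variational principle applied to the composite function $\varphi := d_y \circ f = \|f(\cdot) - y\|$. Fix $B(x_0, \delta) \subset U$ and $y \in B(f(x_0), \delta\alpha)$, and suppose toward contradiction that $y \notin f(B(x_0, \delta))$; then $\varphi > 0$ on this ball. Since $\varphi(x_0) < \delta\alpha$, pick $\lambda \in \bigl(\varphi(x_0)/\delta,\, \alpha\bigr)$ and then $r \in \bigl(\varphi(x_0)/\lambda,\, \delta\bigr)$, so that $\overline{B}(x_0, r) \subset B(x_0, \delta) \subset U$.

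Apply Ekeland's variational principle to the continuous function $\varphi$ on the complete metric space $\overline{B}(x_0, r)$ to produce a point $\bar x$ satisfying $\varphi(\bar x) + \lambda\|\bar x - x_0\| \leq \varphi(x_0)$ and $\varphi(\bar x) \leq \varphi(x) + \lambda\|x - \bar x\|$ for every $x \in \overline{B}(x_0, r)$. The first inequality forces $\|\bar x - x_0\| \leq \varphi(x_0)/\lambda < r$, so $\bar x$ lies in the \emph{interior} of $\overline{B}(x_0, r)$, and the second inequality then shows that $\bar x$ is a local minimum of the perturbed function $\psi(x) := \varphi(x) + \lambda\|x - \bar x\|$.

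Since $f(\bar x) \neq y$, the chain rule condition guarantees that $A_{\bar x, y}(f)$ is a $w^*$-closed convex pseudo-Jacobian of $\varphi$ at $\bar x$. Because $(x \mapsto \|x - \bar x\|)'_{+}(\bar x; v) = \|v\| = \sup\{x^*(v) : x^* \in \overline B_{X^*}\}$, the set $\overline B_{X^*}$ is a pseudo-Jacobian of $\|\cdot - \bar x\|$ at $\bar x$. The sum rule for pseudo-Jacobians (noted just after Definition \ref{pseudo-jacobian}) then gives that
\[
J\psi(\bar x) := A_{\bar x, y}(f) + \lambda \overline B_{X^*}
\]
is a pseudo-Jacobian of $\psi$ at $\bar x$; it is convex and $w^*$-closed, being the sum of a $w^*$-closed convex set with a $w^*$-compact convex set. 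Theorem \ref{optimality} therefore yields $0 \in J\psi(\bar x)$, i.e.\ there exist $y^* \in \partial d_y(f(\bar x))$ and $T \in \overline{\co}(Jf(\bar x))$ with $\|y^* \circ T\| \leq \lambda$.

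To close the contradiction, observe that $\|y^*\| = 1$ (since $f(\bar x) \neq y$ forces the subgradients of the norm at the nonzero point $f(\bar x) - y$ to be unit vectors), whence $\|y^* \circ T\| = \|T^* y^*\| \geq \nu(T)$. Moreover, $\nu$ is $1$-Lipschitz for the operator norm (from $|\,\|T^*\zeta\| - \|S^*\zeta\|\,| \leq \|T - S\|$ on the unit sphere of $Y^*$), so the hypothesis $\nu \geq \alpha$ on $\co Jf(\bar x)$ extends by continuity to $\overline{\co}(Jf(\bar x))$. Hence $\lambda \geq \nu(T) \geq \alpha$, contradicting $\lambda < \alpha$. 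This establishes \eqref{open}, from which the openness of $f : U \to V$ follows at once by applying \eqref{open} at arbitrary $x_0 \in U$ with $\delta$ small. The main obstacle the argument has to overcome is that Theorem \ref{optimality} only places $0$ in the $w^*$-closed convex hull of a pseudo-Jacobian; this is precisely why the chain rule condition, which bakes in $w^*$-closedness and convexity of $A_{\bar x, y}(f)$, was built into the hypotheses.
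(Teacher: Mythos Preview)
Your proof is correct and follows essentially the same route as the paper's: Ekeland's variational principle applied to $\varphi=\|f(\cdot)-y\|$, the chain rule condition to obtain a $w^*$-closed convex pseudo-Jacobian of the perturbed function $\psi$, and Theorem~\ref{optimality} to place $0$ in $J\psi(\bar x)$, yielding the contradiction $\alpha\le\lambda$. The only cosmetic difference is that the paper passes from $\overline{\co}\,Jf(\bar x)$ to $\co\,Jf(\bar x)$ by an $r$-approximation, whereas you invoke the $1$-Lipschitz continuity of $\nu$ to extend the bound $\nu\ge\alpha$ to the norm closure; both arguments are equivalent.
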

\begin{proof}
Consider an open ball $B(x_0,\, \delta)\subset U$ and let $0<\eta <\delta$. Fix a vector $y\in \B\bigl(f(x_0),\, \eta \alpha\bigr)$, and
choose numbers $\varepsilon,\, \lambda> 0$ such that
$$\frac{\varepsilon}{\eta}< \lambda < \alpha \quad \text{and} \quad \|f(x_0)-y\|< \varepsilon.$$
Let $\varphi:X\to \mathbb R \cup \{+ \infty \}$ be the function defined as
$\varphi(x)= \|f(x)-y\|$ if $x\in \overline{B}(x_0, \, \eta)$, and $\varphi(x)=+\infty$ otherwise. It is easy to see that $\varphi$ is continuous on $B(x_0, \, \eta)$ and lower semicontinuous  on all of $X$. Moreover, $$\varphi(x_0)< \inf
\limits_{x\in X} \varphi(x)+\varepsilon.$$ An appeal to Ekeland variational principle (c.f. \cite[p. 45]{Ph}) yields the existence of a vector $z\in X$ such that
\begin{enumerate}
\item[(a)] $\|z-x_0\|\leq \frac{\varepsilon}{\lambda}< \eta$, and
\item[(b)] $\varphi(z)< \varphi(x)+\lambda \|z-x\|$, for all $x\neq z.$
\end{enumerate}
We shall show that $f(z)=y$. Assume the contrary. Because of $(b)$, the function $$\psi(x)= \varphi(x)+\lambda \|z-x\|, \quad x\in X,$$ attains a (global) minimum at the point $z$. On the other hand, thanks to inequality $(a)$ we have $z\in B(x_0, \, \eta)\subset U$, and bearing in mind that $Jf$ satisfies the chain rule condition on $U$, it follows that the set $A_{z,y}(f)=\partial \|\cdot\|\left(f(z)-y\right)\circ \overline{{\co }}\, \left(Jf(z)\right)$  is a convex $w^*$-closed pseudo-Jacobian of $\varphi$ at $z$. Moreover, since the norm on $X$ is $1$-Lipschitz, it follows that the closed unit ball $\overline{B}_{X^*}$ is a convex $w^*$-compact pseudo-Jacobian of this norm at every point. Therefore, the set
$$
J\psi(z) := A_{z,y}(f)+ \lambda \overline{B}_{X^*}
$$
is a convex $w^*$-closed pseudo-Jacobian of the function $\psi$ at $z$. And taking into account that $\psi$ attains a minimum at this point, thanks to Theorem \ref{optimality} we obtain $0\in J\psi (z)$.

Thus, given any number $0< r < 1$, we can find an operator $T\in \co\, Jf(z)$ and functionals
$x^*\in \overline{B}_{X^*}$ and $y^*\in \partial \|\cdot\|\left(f(z)-y\right)$ such that
\begin{equation}
\left\|y^*\circ T+ \lambda x^*\right\|< r.
\end{equation}
Hence,
$$
\alpha \leq \nu(T) \leq \Vert T^* (y^*) \Vert \leq \Vert y^* \circ T + \lambda x^* \Vert + \lambda \Vert x^* \Vert < r + \lambda.
$$
Letting $r$ go to zero we get $\alpha \leq \lambda$, which is a contradiction with our choice of $\lambda$. Consequently, $f(z)=y$. In this way we see that
$B\bigl(f(x_0),\, \eta \alpha\bigr)\subset f\bigl(\B(x_0,\, \eta)\bigr)$ for every $0< \eta < \delta$. As a consequence, the inclusion \eqref{open} is proved. It follows that $V=f(U)$ is open in $Y$, and $f$ is an open map.
\end{proof}

As a first consequence of the former theorem, we obtain the aforementioned result by Ekeland.
\begin{cor}$($\cite[Theorem 2]{Ekeland}$)$\label{Ekeland}
Let $U$ be an open subset of $X$ and $f:U\to Y$ be a continuous map. Assume that $f$ is G\^ateaux differentiable on all of $U$ and, for every $x\in U$, the derivative $f'(x)$ admits a right inverse $R(x)\in \mathcal{L}(Y, X)$. If
$$\kappa := \sup_{x\in U} \|R(x)\|< \infty$$
then for every open ball $B(x_0, \delta)\subset U$ we have
$$B\bigl(f(x_0), \, \delta \kappa^{-1}\bigr)\subset f\bigl(B(x_0, \delta)\bigr).$$
In particular, $f(U)$ is an open subset of $V$, and $f$ is an open map.
\end{cor}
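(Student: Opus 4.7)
The plan is to deduce this corollary as an immediate specialization of Theorem \ref{main-open}, applied to the singleton-valued pseudo-Jacobian mapping $Jf(x) := \{f'(x)\}$. Once this choice is made, essentially everything has already been done earlier in the section, and the proof reduces to verifying the three hypotheses of Theorem \ref{main-open} and to translating the resulting lower bound on the surjectivity index into the desired lower bound on the radius of the enclosed ball.

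First, since Gâteaux differentiability trivially implies weak Gâteaux differentiability, Example \ref{wG} ensures that the set $\{f'(x)\}$ is a pseudo-Jacobian of $f$ at every point of $U$, so $Jf$ is a pseudo-Jacobian mapping on $U$. Next, Proposition \ref{crc-ex1} is precisely the statement that this singleton-valued pseudo-Jacobian mapping satisfies the chain rule condition on $U$; since $f$ is assumed to be Gâteaux differentiable on all of $U$, no localization is needed. Finally, the lower bound on the Rabier surjectivity index is already recorded in the paragraph introducing $\nu(T)$: whenever a bounded linear operator $T$ admits a right inverse $R$, one has $\nu(T)\ge \|R\|^{-1}$. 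Applying this with $T = f'(x)$ and $R = R(x)$ yields $\nu(f'(x)) \ge \|R(x)\|^{-1} \ge \kappa^{-1}$ for every $x\in U$. Because $\co\, Jf(x) = \{f'(x)\}$ is a singleton, this gives
\[
\alpha \;:=\; \inf\{\nu(T)\,:\, T\in \co\, Jf(x),\; x\in U\} \;\ge\; \kappa^{-1} \;>\;0.
\]

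With all three hypotheses verified, Theorem \ref{main-open} applies and yields, for every open ball $B(x_0,\delta)\subset U$, the inclusion $B(f(x_0), \delta\alpha)\subset f(B(x_0,\delta))$. Since $\alpha\ge \kappa^{-1}$, this implies $B(f(x_0), \delta\kappa^{-1})\subset f(B(x_0,\delta))$, which is the claimed estimate; openness of $f(U)$ and of the map $f$ then follow from the corresponding assertions in Theorem \ref{main-open}. There is no genuine obstacle to overcome here: the corollary is essentially a dictionary entry translating the smooth, right-invertible framework of Ekeland into the language of pseudo-Jacobians developed in this paper, and the only point requiring any attention is the elementary inequality $\nu(T) \ge \|R\|^{-1}$ used to pass from the hypothesis on $\|R(x)\|$ to the hypothesis on $\alpha$ needed by Theorem \ref{main-open}.
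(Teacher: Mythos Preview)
Your proof is correct and follows essentially the same approach as the paper: define $Jf(x)=\{f'(x)\}$, invoke Proposition \ref{crc-ex1} for the chain rule condition, use the inequality $\nu(T)\ge\|R\|^{-1}$ to obtain $\alpha\ge\kappa^{-1}>0$, and apply Theorem \ref{main-open}. The paper's version is slightly terser but otherwise identical.
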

\begin{proof} Consider the pseudo-Jacobian mapping $Jf:U\to 2^{\mathcal{L}(X,Y)}$ defined by the formula $Jf(x) = \{f'(x)\}, \, x\in U$. According to Proposition \ref{crc-ex1} we have that $Jf$ satisfies the chain rule condition on $U$. On ther hand, because of the hypothesis we get
$$\alpha:=\inf_{x\in U}\nu\left(f'(x)\right)\geq \inf_{x\in U} \|R(x)\|^{-1}\geq \kappa^{-1}> 0,$$ and Theorem \ref{main-open} applies.
\end{proof}

Now, we present some inversion theorems. In order to simplify their statements, we introduce the following concept.

\begin{defn}
Let $f: U\subset X \to Y$ be a map, and let $Jf$ be a pseudo-Jacobian mapping for  $f$.  The regularity index of $Jf$ at a point $x\in U$ is defined as the number
$$
\alpha_{Jf}(x):=
\sup_{r>0} \inf\{/\hspace{-0.9mm}/T/\hspace{-0.9mm}/: T \in co\, Jf(B(x, r))\}.
$$
We say that the pseudo-Jacobian mapping $Jf$ is regular at a point $x\in U$ if, for some $r>0$,  every operator $T\in \co\, Jf(B(x, r))$ is an isomorphism,  and furthermore $\alpha_{Jf}(x)>0$.
\end{defn}

The character of regularity of the pseudo-Jacobian mapping $Jf$ at a point $x$, as well of the regularity index $\alpha_{Jf}(x)$, are of uniform nature: they rely on the behaviour of $Jf$ in a neighborhood of $x$. The next result shows that in the case that $Jf$ is upper semicontinuous at $x$, then both characteristics depend only on the behaviour of $Jf$ at that point.

\begin{prop}\label{usc-2}
Let $U$ be an open subset of $X$, let $f:U\to Y$ be a continuous map and $Jf$ a pseudo-Jacobian mapping for $f$ on $U$, which is upper semicontinuous at a point $x\in U$. Assume that:
\begin{enumerate}
\item[(i)] Every $T \in {\rm co}\, Jf(x)$ is an isomorphism, and
\item[(ii)] $\inf \left\{/\hspace{-0.9mm}/T/\hspace{-0.9mm}/: T \in co\, Jf(x))\right\}> 0.$
\end{enumerate}
Then, $Jf$ is regular at $x$ and $\alpha_{Jf}(x) = \inf \left\{/\hspace{-0.9mm}/T/\hspace{-0.9mm}/: T \in co\, Jf(x))\right\}$.
\end{prop}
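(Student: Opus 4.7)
Set $\beta := \inf\{/\hspace{-0.9mm}/T/\hspace{-0.9mm}/: T \in \co\, Jf(x)\}$, which by hypothesis $(ii)$ is strictly positive. The plan is to establish, in this order, (a) the inequality $\alpha_{Jf}(x)\leq \beta$, (b) the isomorphism condition required in the definition of regularity, and (c) the reverse inequality $\alpha_{Jf}(x)\geq \beta$.

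First, note that for every $r>0$ the trivial inclusion $Jf(x) \subset Jf(B(x,r))$ yields $\co\, Jf(x) \subset \co\, Jf(B(x,r))$, so taking infima gives
$$
\inf\{/\hspace{-0.9mm}/T/\hspace{-0.9mm}/: T \in \co\, Jf(B(x,r))\} \leq \beta,
$$
and therefore $\alpha_{Jf}(x)\leq \beta$.

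For the main work I will exploit the upper semicontinuity of $Jf$ at $x$ together with the elementary fact that the co-norm is $1$-Lipschitz with respect to the operator norm; namely, for any $T,S\in \mathcal{L}(X,Y)$,
$$
\bigl|/\hspace{-0.9mm}/T+S/\hspace{-0.9mm}/ - /\hspace{-0.9mm}/T/\hspace{-0.9mm}/\bigr| \leq \|S\|,
$$
which follows at once from the definition $/\hspace{-0.9mm}/\cdot/\hspace{-0.9mm}/ = \inf_{\|v\|=1}\|\cdot v\|$. Given $\varepsilon>0$, upper semicontinuity provides $\delta>0$ with $Jf(B(x,\delta))\subset Jf(x) + \varepsilon\, B_{\mathcal{L}(X,Y)}$, and since both sides are stable under convex combinations (using $\co(A+B)=\co(A)+\co(B)$ and convexity of the unit ball) I obtain
$$
\co\, Jf(B(x,\delta)) \subset \co\, Jf(x) + \varepsilon\, B_{\mathcal{L}(X,Y)}.
$$
Then any $T'\in \co\, Jf(B(x,\delta))$ decomposes as $T' = T + S$ with $T\in \co\, Jf(x)$ and $\|S\|<\varepsilon$, so by the Lipschitz estimate $/\hspace{-0.9mm}/T'/\hspace{-0.9mm}/ \geq /\hspace{-0.9mm}/T/\hspace{-0.9mm}/ - \varepsilon \geq \beta-\varepsilon$. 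Taking infimum and letting $\varepsilon\to 0$ gives $\alpha_{Jf}(x)\geq \beta$, which together with the previous step yields the equality $\alpha_{Jf}(x)=\beta>0$.

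It remains to check that every $T'\in \co\, Jf(B(x,\delta))$ is an isomorphism for a suitably small $\delta$. Fix any $\varepsilon$ with $0<\varepsilon<\beta$, and pick the corresponding $\delta>0$ provided by upper semicontinuity. For $T'=T+S$ with $T\in \co\, Jf(x)$ and $\|S\|<\varepsilon$, hypothesis $(i)$ says $T$ is an isomorphism, and since $/\hspace{-0.9mm}/T/\hspace{-0.9mm}/\geq \beta$ one has $\|T^{-1}\| = 1//\hspace{-0.9mm}/T/\hspace{-0.9mm}/\leq 1/\beta$. Writing $T' = T(I + T^{-1}S)$ with $\|T^{-1}S\|\leq \varepsilon/\beta<1$, the Neumann series shows $I+T^{-1}S$ is invertible in $\mathcal{L}(X)$, and hence $T'$ is an isomorphism. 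This step, the passage from ``isomorphism on $\co\, Jf(x)$'' to ``isomorphism on a whole neighborhood $\co\, Jf(B(x,\delta))$'', is the only place where the quantitative bound $\beta>0$ is genuinely needed, and is the step I anticipate requires the most care, since upper semicontinuity alone (without the uniform lower bound on co-norms) would not suffice to propagate invertibility through the Neumann argument.
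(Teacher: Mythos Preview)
Your proof is correct and follows essentially the same approach as the paper: use upper semicontinuity to obtain the inclusion $\co\, Jf(B(x,\delta))\subset \co\, Jf(x)+\varepsilon\, B_{\mathcal{L}(X,Y)}$, then apply a Neumann-series perturbation argument (using $\|T^{-1}\|\leq 1/\beta$) to propagate the isomorphism property, and use the $1$-Lipschitz behaviour of the co-norm to get the lower bound $\alpha_{Jf}(x)\geq \beta-\varepsilon$. The only cosmetic differences are the order in which you present the three steps and that the paper writes the perturbation as $T_0^{-1}\circ T$ close to the identity rather than $T(I+T^{-1}S)$; these are equivalent.
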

\begin{proof}
Let us write $\beta := \inf \left\{/\hspace{-0.9mm}/T/\hspace{-0.9mm}/: T \in co\, Jf(x))\right\}$ and take $0 < \varepsilon < \beta$. The upper semicontinuity of $Jf$ ensures the existence of a number $r> 0$ such that
$$Jf \bigl(B(x, r)\bigr) \subseteq Jf (x) + \varepsilon B_{\mathcal{L}(X,Y)} \subseteq {\rm co}\, Jf(x) + \varepsilon B_{\mathcal{L}(X,Y)},$$
and taking into account that the latter set is convex we get
\begin{equation}
{\rm co}\, Jf\bigl(B(x, r)\bigr)\subseteq {\rm co}\, Jf(x) + \varepsilon B_{\mathcal{L}(X,Y)}.\nonumber
\end{equation}
Now, let $T\in {\rm co}\, Jf\bigl(B(x, r)\bigr)$. Because of the previous inclusion, we can find an operator $T_0\in Jf(x)$ such that $\|T-T_0\|< \varepsilon$. By hypothesis, $T_0$ is an isomorphism, and $$\|T_0^{-1}\| = /\hspace{-0.9mm}/T_0/\hspace{-0.9mm}/^{-1} \leq \beta^{-1}.$$
Therefore,
$$\left\|T_0^{-1}\circ T- {\rm Id}_X \right\| = \left\|T_0^{-1}\circ (T-T_0)\right\|\leq \varepsilon \beta^{-1}< 1.$$
In particular, $T_0^{-1}\circ T$ is an isomorphism, and thus, $T$ is an isomorphism as well.

It remains to show that $\alpha_{Jf}(x) = \beta$. It is clear that $\alpha_{Jf}(x) \leq \beta$. As before, we fix $0< \varepsilon < \beta$, and find a number $r> 0$ with the property that for any $T\in Jf\bigl(B(x, r)\bigr)$ there is $T_0\in {\rm co}\, \bigl(Jf(x)\bigr)$
satisfying $\|T-T_0\|< \varepsilon$. Thus,
$$
/\hspace{-0.1cm}/T/\hspace{-0.1cm}/ = \inf \limits_{h\in S_X} \|Th\|\geq \inf \limits_{h\in S_X} \|T_0 h\|- \varepsilon = /\hspace{-0.1cm}/T_0/\hspace{-0.1cm}/-\varepsilon \geq \alpha- \varepsilon.
$$
Consequently, $\alpha_{Jf}(x)\geq \alpha-\varepsilon$, for all $\varepsilon> 0$, and therefore, $\alpha_{Jf(x)}\geq \alpha$.
\end{proof}

\

As a consequence of the former proposition it follows that if the map $f:U\subset X\to Y$ is $\mathcal{C}^{1}$-smooth on $U$, then the pseudo-Jacobian mapping $Jf = \{f'\}$ is regular at a point $x\in U$ if, and only if, $f'(x)$ is an isomorphism. Moreover, in this case $\alpha_{Jf}(x)= \|f'(x)^{-1}\|^{-1}$.

\

Next, we present a local inversion result in terms of pseudo-Jacobians.
\begin{thm}\label{local-inversion}
Let $U$ be an open subset of $X$, let $f:U\to Y$ be a continuous map and $Jf$ a pseudo-Jacobian mapping for $f$ on $U$ satisfying the chain rule condition. If $Jf$ is regular at a point $x\in U$, then for every $0< \alpha < \alpha_{Jf}(x)$ there exists an open ball $B(x, r)\subseteq U$ with the following properties:
\begin{enumerate}
\item The set $V:= f\bigl(B(x, r)\bigr)$ is open in $Y$,
\item The map $f_{\upharpoonright B(x, r)}: B(x, r) \to V$ is an homeomorphism, and
\item The map $(f_{\upharpoonright B(x, r)})^{-1}$ is $\alpha^{-1}$-Lipschitz on $V$.
\end{enumerate}
\end{thm}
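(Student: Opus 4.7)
My plan is to combine the open mapping theorem (Theorem \ref{main-open}) with the Mean Value Property (Theorem \ref{MVT}): the first will yield openness of the image $V$, while the second will produce a Lipschitz-from-below estimate for $f$ that simultaneously implies injectivity on $B(x,r)$ and gives the Lipschitz constant $\alpha^{-1}$ for the inverse.

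The first step is to exploit the regularity of $Jf$ at $x$ together with the fact that $\alpha_{Jf}(x)$ is a supremum over radii, so as to pick a single $r>0$ with $B(x,r)\subseteq U$ such that every operator in $\co\,Jf(B(x,r))$ is an isomorphism and
$$
\inf\bigl\{/\hspace{-0.9mm}/T/\hspace{-0.9mm}/:\, T\in \co\,Jf(B(x,r))\bigr\}>\alpha.
$$
Since every such $T$ is an isomorphism we have $\nu(T)=/\hspace{-0.9mm}/T/\hspace{-0.9mm}/$, so the hypothesis of Theorem \ref{main-open} holds on the open set $B(x,r)$ (the chain rule condition being inherited from $U$). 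Applying that theorem with $B(x_0,\delta)\subseteq B(x,r)$ yields $B(f(x_0),\delta\alpha)\subseteq f(B(x_0,\delta))\subseteq V$ for every $x_0\in B(x,r)$, proving Part (1).

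For Parts (2) and (3), I would fix $u,v\in B(x,r)$, use convexity of $B(x,r)$ to ensure $[u,v]\subseteq B(x,r)$, and appeal to Theorem \ref{MVT} to write
$$
f(v)-f(u)\in\overline{\co}\,\bigl(Jf([u,v])(v-u)\bigr).
$$
The key algebraic observation is that every point of the (non-closed) convex hull above has the form $\bigl(\sum\lambda_i T_i\bigr)(v-u)$, with $\sum\lambda_i T_i\in \co\,Jf([u,v])\subseteq \co\,Jf(B(x,r))$; such an operator is an isomorphism with co-norm strictly greater than $\alpha$, so its image of $v-u$ has norm at least $\alpha\|v-u\|$. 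Passing to the norm closure in $Y$ gives $\|f(v)-f(u)\|\geq \alpha\|v-u\|$, forcing injectivity of $f$ on $B(x,r)$ and showing $\bigl(f_{\upharpoonright B(x,r)}\bigr)^{-1}$ to be $\alpha^{-1}$-Lipschitz on $V$; combined with the continuity of $f$, this yields the homeomorphism claim in Part (2).

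I expect the only subtle point to be the very first step, namely the selection of a radius $r$ on which every convex combination of pseudo-Jacobian operators is simultaneously invertible and satisfies a uniform co-norm bound strictly above $\alpha$. This would be delicate in principle, since convex combinations of invertible operators can easily be singular (think of $T$ and $-T$), but the regularity hypothesis on $Jf$ is precisely tailored to preclude this pathology on a neighbourhood of $x$. Once this is in place, the Mean Value Property transports the operator-level bound directly to a global Lipschitz lower bound on $f$, with no cancellation concern, because the convex combinations live at the operator level and are evaluated on a common direction $v-u$.
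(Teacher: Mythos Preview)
Your proposal is correct and follows essentially the same route as the paper. The paper packages the argument into an auxiliary lemma (Lemma \ref{main-inverse}) applied to the convex open set $B(x,r)$, but the content is identical: use regularity to choose $r$ with a uniform co-norm bound on $\co\,Jf(B(x,r))$, invoke Theorem \ref{main-open} (via $\nu(T)=/\hspace{-0.9mm}/T/\hspace{-0.9mm}/$ for isomorphisms) for openness, and use the Mean Value Property (Theorem \ref{MVT}) together with the co-norm bound to get $\|f(v)-f(u)\|\geq\alpha\|v-u\|$. The only cosmetic difference is that the paper approximates $f(v)-f(u)$ by some $T(v-u)$ with $T\in\co\,Jf([u,v])$ and uses the triangle inequality, whereas you argue directly that every element of $\co\bigl(Jf([u,v])(v-u)\bigr)$ has norm at least $\alpha\|v-u\|$ and pass to the norm closure; these are the same estimate.
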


The main ingredient in the proof of this theorem is the following lemma.
\begin{lema}\label{main-inverse}
Let $U$ be an open convex subset of $X$, let $f:U\to Y$ be a continuous map and $Jf$ a pseudo-Jacobian mapping for $f$ on $U$ satisfying the chain rule condition. Suppose that every operator in $co\, Jf(U)$ is an isomorphism and, furthermore,
$$
\alpha:= \inf\{/\hspace{-0.9mm}/T/\hspace{-0.9mm}/: T \in co\, Jf(U)\} > 0.
$$
Then, the set $V:=f(U)$ is open in $Y$, and $f$ is an homeomorphism from $U$ to $V$, whose inverse is $\alpha^{-1}$-Lipschitz on $V$.
\end{lema}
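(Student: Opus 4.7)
The plan is to combine two inputs: Theorem \ref{main-open} will supply the openness of $V=f(U)$, and the mean value property (Theorem \ref{MVT}) will supply a bi-Lipschitz lower bound, from which both the injectivity of $f$ and the $\alpha^{-1}$-Lipschitz estimate for $f^{-1}$ fall out.

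First I would verify the hypothesis of Theorem \ref{main-open}. Each $T\in \co\, Jf(U)$ is by assumption an isomorphism, so $\nu(T)=/\hspace{-0.9mm}/T/\hspace{-0.9mm}/\geq \alpha$ by the identity recalled before Theorem \ref{main-open}. Since $\co\, Jf(x)\subset \co\, Jf(U)$ for every $x\in U$, the infimum in the hypothesis of Theorem \ref{main-open} is at least $\alpha>0$. The chain rule condition being assumed, that theorem applies and gives at once that $V$ is open in $Y$ and that $f$ is an open map from $U$ onto $V$.

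For the lower bound, I would fix $u,v\in U$; convexity of $U$ gives $[u,v]\subset U$, so Theorem \ref{MVT} yields
$$f(v)-f(u)\in \overline{\co\,}\bigl(Jf([u,v])(v-u)\bigr).$$
A generic element of $\co\,\bigl(Jf([u,v])(v-u)\bigr)$ can be rewritten as $\sum_i \lambda_i T_i(v-u)=\bigl(\sum_i\lambda_i T_i\bigr)(v-u)=T(v-u)$ with $T\in \co\, Jf([u,v])\subset \co\, Jf(U)$, and hence $/\hspace{-0.9mm}/T/\hspace{-0.9mm}/\geq \alpha$. Choosing a sequence $T_n\in \co\, Jf(U)$ with $T_n(v-u)\to f(v)-f(u)$ and passing to the limit in $\|T_n(v-u)\|\geq \alpha\|v-u\|$ produces the key estimate
$$\|f(v)-f(u)\|\geq \alpha\|v-u\|, \quad u,v\in U.$$
This inequality gives injectivity of $f$, so $f:U\to V$ is a bijection, and it shows that $f^{-1}:V\to U$ is $\alpha^{-1}$-Lipschitz. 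Together with the openness of $V$ this yields the claimed homeomorphism.

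The one subtlety I expect is the algebraic identification of points of $\co\,\bigl(Jf([u,v])(v-u)\bigr)$ with expressions $T(v-u)$ for $T$ a convex combination of operators in $Jf([u,v])$; this is precisely what allows the uniform co-norm bound to survive the passage to the limit in Theorem \ref{MVT}. Observe finally that the chain rule condition is invoked only to apply Theorem \ref{main-open} for the openness statement, and plays no role in the mean value estimate itself.
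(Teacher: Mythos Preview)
Your proof is correct and follows essentially the same two-step strategy as the paper: apply Theorem \ref{main-open} (using $\nu(T)=/\hspace{-0.9mm}/T/\hspace{-0.9mm}/$ for isomorphisms) to get openness, then use Theorem \ref{MVT} and the uniform co-norm bound on $\co\, Jf(U)$ to obtain the estimate $\|f(v)-f(u)\|\geq \alpha\|v-u\|$. The only cosmetic difference is that the paper handles the closure in Theorem \ref{MVT} via an $\varepsilon$-approximation (choosing $T$ with $\|f(v)-f(u)-T(v-u)\|<r\alpha\|v-u\|$ and letting $r\to 0$), whereas you pass to a limit along a sequence $T_n(v-u)\to f(v)-f(u)$; the two formulations are equivalent.
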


\begin{proof}
Note that, since  every $T\in co\, Jf(U)$ is a linear isomorphism, we have $\nu(T) = /\hspace{-0.1cm}/T/\hspace{-0.1cm}/$. Then, from the Theorem \ref{main-open} we obtain that $V$ is an open set and $f$ is an open map. Now, we shall show that
\begin{equation}
\|f(x_1)-f(x_2)\|\geq \alpha \|x_1-x_2\|, \quad \text{whenever} \quad x_1,x_2\in U.\label{lipschitz}
\end{equation}
Take $x_1,x_2\in U$, and pick a number $0< r < 1$. According to Theorem \ref{MVT}, we can find an operator $T\in \co\, \left(Jf([x_1,x_2])\right)$ such that
$$\|f(x_1)-f(x_2)-T(x_1-x_2)\|\leq r\alpha \|x_1-x_2\|.$$
Since the set $U$ is convex we have $[x_1,x_2]\subset U$, and thus $/\hspace{-0.1cm}/T/\hspace{-0.1cm}/\geq \alpha$ . So, from the previous inequality it follows that
$$\begin{aligned} \|f(x_1)-f(x_2)\|&\geq \|T(x_1-x_2)\|-r\alpha\|x_1-x_2\|\\ &\geq /\hspace{-0.1cm}/T/\hspace{-0.1cm}/\|x_1-x_2\|-r\alpha\|x_1-x_2\|\\ &\geq \alpha(1-r)\|x_1-x_2\|.\end{aligned}$$
As the number $r$ was chosen arbitrarily, from this inequality we obtain \eqref{lipschitz}.
Now, it follows at once from \eqref{lipschitz} that $f$ is one-to-one on $U$, and that its inverse is $\alpha^{-1}$-Lipschitz on $V$.
\end{proof}

\emph{Proof of Theorem \ref{local-inversion}.} The regularity of $Jf$ ensures the existence of a number $r> 0$ with $B(x, r)\subset U$, and such that every operator $T\in {\rm co}\, Jf(U)$ is an isomorphism and
$\inf \left\{\,\hspace{-0.9mm}/\hspace{-0.9mm}/T/\hspace{-0.9mm}/:\, T\in {\rm co}\, Jf(U)\right\}\geq \alpha.$
Thus, the conclusion follows applying Lemma \ref{main-inverse} to the map $f_{\upharpoonright B(x, r)}$. \hfill $\square$

\

Now, we establish a criterion on global invertibility involving an Hadamard-like integral condition in terms of the regularity index of a pseudo-Jacobian. First, we recall that if $X, Y$ are Banach spaces,  $U\subset X$ is an  open set and $f:U \to Y$  a continuous map, the {\it lower scalar Dini derivative} of $f$ at a point $x\in U$  is defined  as
$$
D^{-}_x f =\liminf_{z\to x} \frac{\|f(z) - f(x)\|}{\|z - x\|}.
$$

It is a classical result due to F. John (see Corollary in page 91 of \cite{John}) that, if $f:X \to Y$ is a local homeomorphism between Banach spaces (a {\it regular map} in the terminology of \cite{John}) which satisfies
$$
\int_0^{\infty} \inf_{\Vert x \Vert \leq t}  D^{-}_x f \, {\rm d}t = \infty,
$$
then $f$ is in fact a global homeomorphism from $X$ onto $Y$ (analogous results for maps between metric spaces have been obtained in  \cite[Theorem 4.6]{GutuJaramillo} and   \cite[Corollary 7]{GaGuJa}). Furthermore, it was proved in \cite[Theorem IIA]{John} (see also \cite[Theorem 6]{GaGuJa}) that in this case, for each $x_0\in X$ and each $\delta> 0$, we have
\begin{equation}
B\bigl(f(x_0),\, \rho \bigr)\subset f\bigl(\B(x_0,\, \delta)\bigr), \quad \text{where} \quad
\rho = \int_0^{\delta} \inf_{\Vert x - x_0 \Vert \leq t} D_x^- f \, {\rm d}t.\nonumber
\end{equation}

\begin{thm}\label{main-H}
Let $f:X\to Y$ be a continuous map between Banach spaces and let $Jf$ be a pseudo-Jacobian mapping for $f$ satisfying the chain rule condition. Suppose that $Jf$ is regular at every $x\in X$ and satisfies the Hadamard integral condition:
\begin{equation}
\int_0^{\infty} \inf_{\Vert x \Vert \leq t} \alpha_{Jf}(x) \, {\rm d}t =  \infty. \label{int}
\end{equation}
Then,  $f$ is a global homeomorphism from $X$ onto $Y$, whose inverse is Lipschitz on each bounded subset of $Y$. Furthermore, for each $x_0\in X$ and each $\delta> 0$, we have
\begin{equation}
B\bigl(f(x_0),\, \rho \bigr)\subset f\bigl(\B(x_0,\, \delta)\bigr), \quad \text{where} \quad \rho = \int_0^{\delta} \inf_{\Vert x - x_0 \Vert \leq t} \alpha_{Jf}(x) \, {\rm d}t.\label{balls-H}
\end{equation}
\end{thm}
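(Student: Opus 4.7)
The plan is to reduce the global homeomorphism claim to F.~John's theorem recalled just before the statement by producing a pointwise lower bound $D^{-}_x f \ge \alpha_{Jf}(x)$, to deduce the ball inclusion \eqref{balls-H} as a corollary of John's own ball formula, and then to handle the bounded-Lipschitz claim for $f^{-1}$ as a separate argument that only uses \eqref{balls-H} together with the integral hypothesis \eqref{int}.

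To carry out the first step, fix $x\in X$ and any $0 < \alpha < \alpha_{Jf}(x)$. By Theorem \ref{local-inversion} there is $r>0$ such that $(f_{\upharpoonright B(x,r)})^{-1}$ is $\alpha^{-1}$-Lipschitz, which is equivalent to $\|f(z)-f(x)\|\ge \alpha\|z-x\|$ for $z\in B(x,r)$. Letting $z\to x$ gives $D^{-}_x f\ge \alpha$, and then letting $\alpha\uparrow \alpha_{Jf}(x)$ yields $D^{-}_x f\ge \alpha_{Jf}(x)$ for every $x\in X$. In particular $f$ is a local homeomorphism (a regular map in John's sense), and the hypothesis \eqref{int} forces
$$
\int_0^{\infty}\inf_{\|x\|\le t}D^{-}_x f \,{\rm d}t=\infty.
$$
Thus F.~John's theorem applies and $f$ is a global homeomorphism from $X$ onto $Y$; moreover, his ball inclusion gives
$$
B\Bigl(f(x_0),\int_0^{\delta}\inf_{\|x-x_0\|\le t}D^{-}_x f\,{\rm d}t\Bigr)\subset f\bigl(B(x_0,\delta)\bigr),
$$
and since the integrand here dominates $\inf_{\|x-x_0\|\le t}\alpha_{Jf}(x)$, the inclusion \eqref{balls-H} follows.

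The delicate point, and the main obstacle, is the bounded-Lipschitz property of $f^{-1}$, since $\alpha_{Jf}(x)$ may decay to $0$ as $\|x\|\to\infty$. Define the non-increasing function $\beta(s):=\inf_{\|x\|\le s}\alpha_{Jf}(x)$; I first claim $\beta(s)>0$ for all $s>0$, for if $\beta(s_0)=0$ then $\beta\equiv 0$ on $[s_0,\infty)$ and the integral \eqref{int} would be finite, contradicting the hypothesis. Next, applying \eqref{balls-H} with $x_0=0$ and using that $\int_0^{\delta}\beta(t)\,{\rm d}t\to\infty$ as $\delta\to\infty$, we see that $f^{-1}$ sends bounded sets into bounded sets, so for each $M>0$ there exists $R_M>0$ with $f^{-1}(\overline{B}(f(0),M))\subset \overline{B}(0,R_M)$. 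Now take $y_1,y_2\in \overline{B}(f(0),M)$ and set $x_i:=f^{-1}(y_i)$. The contrapositive of \eqref{balls-H} with $x_0=x_1$ reads: if $\|x_2-x_1\|\ge \delta$, then $\|y_2-y_1\|\ge \int_0^{\delta}\inf_{\|x-x_1\|\le t}\alpha_{Jf}(x)\,{\rm d}t$. Letting $\delta\uparrow \|x_1-x_2\|$ and observing that $\|x-x_1\|\le t\le \|x_1-x_2\|\le 2R_M$ implies $\|x\|\le 3R_M$, we obtain
$$
\|y_1-y_2\|\ \ge\ \int_0^{\|x_1-x_2\|}\inf_{\|x-x_1\|\le t}\alpha_{Jf}(x)\,{\rm d}t\ \ge\ \|x_1-x_2\|\,\beta(3R_M),
$$
so $f^{-1}$ is $\beta(3R_M)^{-1}$-Lipschitz on $\overline{B}(f(0),M)$. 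Since any bounded subset of $Y$ sits inside some such ball, this finishes the proof.
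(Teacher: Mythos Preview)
Your proof is correct. The reduction to F.~John's theorem via the pointwise inequality $D^{-}_x f\ge \alpha_{Jf}(x)$, and the derivation of the ball inclusion \eqref{balls-H} from John's own ball formula, are exactly what the paper does.

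The two arguments diverge only in the proof that $f^{-1}$ is Lipschitz on bounded sets. The paper argues as follows: given $R>0$, it uses \eqref{balls-H} at the origin to find $r>0$ with $B(f(0),R)\subset f(B(0,r))$, sets $\alpha(r)=\inf_{\|x\|\le r}\alpha_{Jf}(x)>0$ (same positivity argument as your $\beta$), and observes that by Theorem~\ref{local-inversion} the inverse $f^{-1}$ is \emph{locally} $\alpha^{-1}$-Lipschitz on $f(B(0,r))$ for any $\alpha<\alpha(r)$. It then upgrades this to a global Lipschitz estimate on the convex subset $B(f(0),R)$ by covering the segment $[u,v]$ with neighborhoods on which the local bound holds, extracting a Lebesgue number, and chaining the estimates along a finite partition of the segment. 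Your route instead exploits the ball inclusion \eqref{balls-H} a second time, now centered at $x_1=f^{-1}(y_1)$, and reads off the Lipschitz bound directly from its contrapositive. Your argument is shorter and avoids the segment-compactness step; the paper's argument yields the slightly better constant $\alpha(r)^{-1}$ with $r=R_M$ in your notation, versus your $\beta(3R_M)^{-1}$, but this is immaterial for the statement being proved.
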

\begin{proof}
According to Theorem \ref{local-inversion} we have that $f$ is a local homeomorphism, and moreover, given any $x\in X$ and any $0 < \alpha < \alpha_{Jf}(x)$, there exists $r> 0$ such that
$$
\Vert f(z_1) - f(z_2) \Vert \geq \alpha\Vert z_1-z_2 \Vert, \quad \text{for all} \quad z_1, z_2\in B(x, r).
$$
In particular, $\Vert f(z) - f(x) \Vert \geq \alpha\Vert z-x \Vert,$ for all $z\in B(x, r)$, and so, $D^{-}_x f \geq \alpha$. Therefore, $$D^{-}_x f \geq \alpha_{Jf}(x), \quad \text{for every} \quad  x\in X.$$ Bearing in mind \eqref{int} we get
$$\int_0^{\infty} \inf_{\Vert x \Vert \leq t}  D^{-}_x f \, {\rm d}t=  \infty.$$ Thus, from
the above mentioned results of F. John  it follows that $f$ is a global homeomorphism from $X$ onto $Y$, and that the inclusion \eqref{balls-H} holds.

It remains to show that the map $f^{-1}:Y \to X$ is Lipschitz on bounded subsets of $Y$. We have already proved that, for each $x\in X$ and each $0< \alpha< \alpha_{Jf}(x)$,  $f^{-1}$ is locally $\alpha^{-1}$-Lipschitz in a neighborhood of $f(x)$. Fix an open ball $B\bigl(f(0), R \bigr)$ in $Y$, and by using condition \eqref{int}, find a number
$r> 0$ such that $\int_0^r \alpha_{Jf}(x) {\rm d}t \geq R$. Taking into account \eqref{balls-H} we obtain
$$B\bigl(f(0); R\bigr)\subset f\bigl(B(0, r)\bigr).$$ Now, set $V_r:= f\left(B(0, r)\right)$ and
$$
\alpha(r) :=\inf \{\alpha_{Jf}(x) \, : \, \Vert x \Vert \leq r \}.
$$
Again from condition \eqref{int}  we get $\alpha(r)>0$. Fix  $0<\alpha <\alpha(r)$. Then, $f$ is an homeomorphism from $B(0, r)$ onto $V_r$ and $f^{-1}$ is locally $\alpha^{-1}$-Lipschitz on $V_r$. Now, from a standard technique, using the convexity of the ball $B\bigl(f(0), R \bigr)$ we deduce that $f^{-1}$ is, in fact, globally $\alpha^{-1}$-Lipschitz on $B\bigl(f(0), R \bigr)$. We give some details for completeness. Choose $u, v\in  B\bigl(f(0), R \bigr)$. Each point $z$ in the segment $[u, v]$ has an open neighborhood $W^z$ such that $f^{-1}$ is locally $\alpha^{-1}$-Lipschitz on $W^z$. This provides an open covering  of $[u, v]$, and by compactness we obtain a Lebesgue number $\delta>0$ such that every subset of $[u, v]$ with diameter less than $\delta$ is contained in some $W^z$. Now, take a number $n \in \mathbb N$ with $n> R\delta^{-1}$, and define $z_j= \frac{n-j}{n}u +\frac{j}{n}v$ for $j=0, 1, \cdots , n$. Then,
$$
\left\Vert f^{-1}(u) - f^{-1}(v)\right\Vert \leq \sum_{j=1}^n \left\Vert f^{-1}(z_{j-1}) - f^{-1}(z_j)\right\Vert \leq \alpha^{-1} \sum_{j=1}^n \Vert z_{j-1} - z_j\Vert = \alpha^{-1} \Vert u-v\Vert,
$$
as we wanted.
\end{proof}

As a consequence of the previous theorem and its proof, we obtain the following result.

\begin{cor}\label{global-1}
Let $f:X\to Y$ be a continuous map between Banach spaces and let $Jf$ be a pseudo-Jacobian mapping for $f$ satisfying the chain rule condition. Suppose that $Jf$ is regular at every $x\in X$, and
\begin{equation}
\alpha_X:= \inf_{x\in X} \alpha_{Jf}(x)> 0. \nonumber
\end{equation}
Then,  $f$ is a global homeomorphism from $X$ onto $Y$, and the map $f^{-1}$ is $\alpha_X^{-1}$-Lipschitz on $Y$.
\end{cor}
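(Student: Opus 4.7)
The plan is to deduce this corollary as an essentially immediate consequence of Theorem \ref{main-H}, combined with a uniform version of the local Lipschitz estimate from Theorem \ref{local-inversion}. First, since $\alpha_{Jf}(x) \geq \alpha_X > 0$ for every $x \in X$, one has $\inf_{\|x\| \leq t} \alpha_{Jf}(x) \geq \alpha_X$ for all $t \geq 0$. Hence the Hadamard integral $\int_0^{\infty} \inf_{\|x\| \leq t} \alpha_{Jf}(x)\, {\rm d}t$ is infinite, and Theorem \ref{main-H} already yields that $f$ is a global homeomorphism from $X$ onto $Y$, with $f^{-1}$ Lipschitz on every bounded subset of $Y$. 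The only point that still needs attention is the global Lipschitz bound with the explicit constant $\alpha_X^{-1}$.

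To obtain this, fix an arbitrary $0<\alpha<\alpha_X$. For each $x\in X$ one has $\alpha<\alpha_X\leq \alpha_{Jf}(x)$, so Theorem \ref{local-inversion} supplies an open ball $B(x, r_x)\subset X$ such that $f$ is a homeomorphism from $B(x, r_x)$ onto the open set $f(B(x, r_x))\subset Y$ whose inverse is $\alpha^{-1}$-Lipschitz there. In particular, the global map $f^{-1}$ is locally $\alpha^{-1}$-Lipschitz around every point of $Y$, with a Lipschitz constant that does not depend on the base point.

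To pass from this local property to a global one, I would mimic the compactness argument used at the end of the proof of Theorem \ref{main-H}, but applied to arbitrary pairs of points of $Y$ rather than just to a fixed bounded ball. Given $u,v\in Y$, the segment $[u,v]$ is norm-compact in $Y$, and each $z\in[u,v]$ admits an open neighbourhood on which $f^{-1}$ is $\alpha^{-1}$-Lipschitz. Choosing a Lebesgue number $\delta>0$ for the resulting open cover of $[u,v]$ and picking $n\in \mathbb N$ with $\|u-v\|/n<\delta$, the points $z_j= \frac{n-j}{n}u+\frac{j}{n}v$ for $j=0,\dots,n$ have consecutive pairs $z_{j-1},z_j$ lying in a common element of the cover, so that
$$
\|f^{-1}(u)-f^{-1}(v)\|\leq \sum_{j=1}^n \|f^{-1}(z_{j-1})-f^{-1}(z_j)\|\leq \alpha^{-1}\sum_{j=1}^n \|z_{j-1}-z_j\| = \alpha^{-1}\|u-v\|.
$$
Letting $\alpha\uparrow \alpha_X$ gives $\|f^{-1}(u)-f^{-1}(v)\|\leq \alpha_X^{-1}\|u-v\|$ for all $u,v\in Y$, which completes the argument.

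The real work is already performed in Theorems \ref{local-inversion} and \ref{main-H}; the only mildly delicate point is that the hypothesis $\alpha_X>0$ allows one to choose a \emph{uniform} local Lipschitz constant $\alpha^{-1}$ (valid around every point) rather than one depending on the base point, which is precisely the improvement over the ``Lipschitz on bounded subsets'' conclusion of Theorem \ref{main-H}. Everything else is a straightforward compactness-and-triangle-inequality argument along segments in the target space.
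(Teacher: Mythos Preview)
Your proposal is correct and follows essentially the same approach as the paper: verify the Hadamard integral condition trivially from $\alpha_X>0$, invoke Theorem \ref{main-H} for the global homeomorphism, then use the uniform local $\alpha^{-1}$-Lipschitz bound from Theorem \ref{local-inversion} together with the compactness-of-segments argument (exactly the one spelled out at the end of the proof of Theorem \ref{main-H}) to upgrade to a global Lipschitz bound, and finally let $\alpha\uparrow\alpha_X$. The paper's proof is terser, simply pointing back to those arguments rather than rewriting them, but the content is the same.
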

\begin{proof}
Condition \eqref{int} is clearly satisfied. Thus, $f: X \to Y$ is a global homeomorphism by Theorem \ref{main-H}. Now fix $0<\alpha < \alpha_X$. Using the argument employed in the proof of that theorem  we also have that for every $x\in X$   the inverse map $f^{-1}: Y \to X$ is locally $\alpha^{-1}$-Lipschitz in a neighborhood of $f(x)$. Then,  as in the same  proof, we can deduce that $f^{-1}$ is globally $\alpha^{-1}$-Lipschitz on $Y$. This gives that $f^{-1}$ is in fact globally $\alpha_X^{-1}$-Lipschitz on $Y$.
\end{proof}

In the case of reflexive spaces, we obtain the following result.

\begin{cor}\label{reflex}
Let $f:X\to Y$ be a locally Lipschitz map between reflexive Banach spaces and let $Jf$ be a locally bounded pseudo-Jacobian for $f$, such that $Jf(x)$ is convex and {\tiny \emph{WOT}}-compact for every $x\in X$. Suppose that $Jf$ is regular at every $x\in X$ and satisfies the Hadamard integral condition:
\begin{equation}
\int_0^{\infty} \inf_{\Vert x \Vert \leq t} \alpha_{Jf}(x) \, {\rm d}t =  \infty.
\end{equation}
Then,  $f$ is a global homeomorphism from $X$ onto $Y$, whose inverse is Lipschitz on each bounded subset of $Y$.
\end{cor}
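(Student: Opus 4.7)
The plan is to reduce this corollary to Theorem~\ref{main-H}. The only hypothesis of that theorem not already present in Corollary~\ref{reflex} is the chain rule condition on $Jf$, so establishing this condition is the main task. The key observation is that reflexive Banach spaces admit Fr\'echet smooth renormings: by a classical theorem of Troyanski, every reflexive space $Y$ admits an equivalent norm $|\cdot|$ whose dual norm is locally uniformly rotund, so that, by Smulyan's lemma, $|\cdot|$ is itself Fr\'echet differentiable at every nonzero point.

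\medskip

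Next, I would verify that all the hypotheses of Corollary~\ref{reflex} are invariant under this renorming of $Y$. Local Lipschitzness of $f$ and local boundedness of $Jf$ are clearly preserved, since they only see the operator norm up to multiplicative constants. Convexity and {\tiny \emph{WOT}}-compactness of $Jf(x)$ are preserved as well, because the {\tiny \emph{WOT}} topology on $\mathcal{L}(X,Y)$ is determined by the weak topology of $Y$, which is unaffected by equivalent renorming. Finally, the regularity index $\alpha_{Jf}(x)$ changes only by a multiplicative constant bounded between two positive numbers, so both the regularity of $Jf$ at every point and the divergence of the Hadamard integral in~\eqref{int} transfer from one norm to the other. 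At this point Corollary~\ref{crc-ex}(i) applies: since the new norm on $Y$ is Fr\'echet smooth, $Jf$ is locally bounded (hence directionally bounded at every point), and each $Jf(x)$ is convex and {\tiny \emph{WOT}}-compact, the pseudo-Jacobian $Jf$ satisfies the chain rule condition on $X$.

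\medskip

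With the chain rule condition in hand, Theorem~\ref{main-H} yields that $f$ is a global homeomorphism from $X$ onto $Y$ and that $f^{-1}$ is Lipschitz on each $|\cdot|$-bounded subset of $Y$; equivalence of the two norms then gives the analogous statement for the original norm, finishing the proof. The argument is essentially a bookkeeping reduction; the one genuinely external ingredient is Troyanski's renorming theorem, which I expect to be the only nontrivial step. If one wishes to avoid explicit renorming, part~(iii) of Corollary~\ref{crc-ex} provides an alternate route, since local boundedness of $Jf$ combined with reflexivity of $Y$ gives directional weak compactness of $Jf$, and a Fr\'echet smooth norm is in particular weak Hadamard smooth.
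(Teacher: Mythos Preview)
Your proposal is correct and follows essentially the same route as the paper: renorm $Y$ with an equivalent Fr\'echet-smooth norm via Troyanski's theorem, invoke Corollary~\ref{crc-ex}(i) to obtain the chain rule condition, observe that regularity of $Jf$ and the Hadamard integral condition are stable under equivalent renormings, and then apply Theorem~\ref{main-H}. Your write-up is in fact more detailed than the paper's on the invariance checks; the only minor quibble is that your closing ``alternate route'' remark still presupposes a smooth renorming, so it does not truly avoid the renorming step.
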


\begin{proof}
Being reflexive, the space $Y$ admits an equivalent Fr\'echet-smooth norm (c.f. \cite[Corollary 4]{Troyanski}). By Corollary \ref{crc-ex} (i), the pseudo-Jacobian $Jf$ satisfies the chain rule condition for this renorming. Since the regularity of $Jf$ and the Hadamard integral condition are stable under equivalent renormings of the spaces, the conclusion follows from Theorem \ref{main-H}.
\end{proof}

\

As an application of the former corollary, we obtain the following infinite-dimensional version of the global inversion theorem by Pourciau in \cite{Pourciau1988}. Given a locally Lipschitz map $f:X\to Y$  between reflexive Banach spaces, we consider the P\'ales-Zeidan generalized Jacobian $\partial f$ defined in \cite{PZ-07} (see Example \ref{Thibault}) and we denote, for each $x\in X$,
$$
/\hspace{-0.9mm}/ \partial f(x) /\hspace{-0.9mm}/:= \inf \{/\hspace{-0.9mm}/T/\hspace{-0.9mm}/ \, : \, T\in \partial f(x) \}.
$$

\begin{cor}\label{pour}
Let $f:X\to Y$ be a locally Lipschitz map between reflexive Banach spaces, and let $\partial f$ be the P\'ales-Zeidan generalized Jacobian of $f$. Suppose that for each $x\in X$, every $T\in \partial f(x)$ is an isomorphism,
the pseudo-Jacobian $\partial f$ is upper semicontinuous on $U$,  and the following Hadamard integral condition holds:
\begin{equation}
\int_0^{\infty} \inf_{\Vert x \Vert \leq t} /\hspace{-0.9mm}/ \partial f(x) /\hspace{-0.9mm}/ \, {\rm d}t =  \infty.
\end{equation}
Then,  $f$ is a global homeomorphism from $X$ onto $Y$, whose inverse is Lipschitz on each bounded subset of  $Y$.
\end{cor}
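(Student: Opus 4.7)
The plan is to reduce this corollary to Corollary \ref{reflex}, applied with the P\'ales-Zeidan generalized Jacobian $\partial f$ playing the role of the pseudo-Jacobian mapping. I shall verify in turn each of the hypotheses of Corollary \ref{reflex}.

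First, by Example \ref{Thibault} and the reflexivity of $Y$, the mapping $\partial f:X\to 2^{\mathcal{L}(X,Y)}$ is a pseudo-Jacobian of $f$ and, at every point, $\partial f(x)$ is convex and {\tiny \emph{WOT}}-compact. To check local boundedness, fix $x\in X$ and choose $r>0$ with $f$ Lipschitz on $B(x,r)$ with constant $L$. A routine estimate using the definition of $\partial_L f$ (namely $\|D_Lf(z)\|\leq \mathrm{Lip}\,f(z)$ whenever $z\in\Omega_L(f)$) shows that $\|T\|\leq L$ for every $T\in\partial f(z)$ and $z\in B(x,r)$, so $\partial f$ is locally bounded on $X$.

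The crucial step is to show that $\partial f$ is regular at every $x\in X$ with $\alpha_{\partial f}(x)=/\hspace{-0.9mm}/\partial f(x)/\hspace{-0.9mm}/$. For this I plan to invoke Proposition \ref{usc-2}. The upper semicontinuity hypothesis is in force; since $\partial f(x)$ is already convex, we have $\co\,\partial f(x)=\partial f(x)$, so condition (i) of that proposition is precisely the assumption that every $T\in\partial f(x)$ is an isomorphism. The positivity required in condition (ii), namely $/\hspace{-0.9mm}/\partial f(x)/\hspace{-0.9mm}/>0$, I plan to extract from the integral hypothesis itself by contradiction: if $/\hspace{-0.9mm}/\partial f(x_0)/\hspace{-0.9mm}/=0$ for some $x_0\in X$, then the integrand $t\mapsto \inf_{\|x\|\leq t}/\hspace{-0.9mm}/\partial f(x)/\hspace{-0.9mm}/$ would vanish for every $t\geq\|x_0\|$, while being uniformly bounded above by (say) $\mathrm{Lip}\,f(0)$ on $[0,\|x_0\|]$, making the integral finite and contradicting the assumption. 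Proposition \ref{usc-2} then yields regularity and the identification $\alpha_{\partial f}(x)=/\hspace{-0.9mm}/\partial f(x)/\hspace{-0.9mm}/$ at every point.

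With this identification, the assumed Hadamard integral condition for $/\hspace{-0.9mm}/\partial f(\cdot)/\hspace{-0.9mm}/$ becomes exactly the Hadamard integral condition for $\alpha_{\partial f}$ required by Corollary \ref{reflex}. All hypotheses of that corollary being verified, it delivers at once that $f$ is a global homeomorphism from $X$ onto $Y$ whose inverse is Lipschitz on each bounded subset of $Y$.

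The step I expect to be the main obstacle is the verification of condition (ii) of Proposition \ref{usc-2}, that is, the strict positivity of $/\hspace{-0.9mm}/\partial f(x)/\hspace{-0.9mm}/$. One might hope to read it off directly from the {\tiny \emph{WOT}}-compactness of $\partial f(x)$ together with the isomorphism hypothesis, but the co-norm $/\hspace{-0.9mm}/\cdot/\hspace{-0.9mm}/$ is in general not {\tiny \emph{WOT}}-lower semicontinuous (since the infimum defining it ranges over the unit sphere of $X$), so a direct compactness argument is unavailable; the detour via the integral hypothesis seems essential here.
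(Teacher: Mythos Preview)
Your proposal is correct and follows essentially the same route as the paper: verify that $\partial f$ is a locally bounded pseudo-Jacobian with convex {\tiny WOT}-compact values, invoke Proposition~\ref{usc-2} to identify $\alpha_{\partial f}(x)$ with $/\hspace{-0.9mm}/\partial f(x)/\hspace{-0.9mm}/$, and then apply Corollary~\ref{reflex}. The paper cites Example~\ref{Loc-WOT} for the local boundedness and {\tiny WOT}-compactness, whereas you give a direct estimate; and the paper explicitly mentions that Corollary~\ref{crc-ex}~(i) supplies the chain rule condition after a Fr\'echet-smooth renorming, a detail already absorbed into the proof of Corollary~\ref{reflex} that you invoke.

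There is one point where you are in fact more careful than the paper. To apply Proposition~\ref{usc-2} one needs its hypothesis~(ii), i.e.\ $/\hspace{-0.9mm}/\partial f(x)/\hspace{-0.9mm}/>0$, and the paper does not pause to justify this. Your contradiction argument via the integral condition is correct: since $0$ lies in every ball $\{\|x\|\le t\}$, the integrand is majorised by $/\hspace{-0.9mm}/\partial f(0)/\hspace{-0.9mm}/\le \mathrm{Lip}\,f(0)<\infty$ for all $t$, so if the integrand vanishes from $t=\|x_0\|$ onward the total integral is finite. Your remark that {\tiny WOT}-compactness of $\partial f(x)$ alone does not force the infimum to be positive (because $/\hspace{-0.9mm}/\cdot/\hspace{-0.9mm}/$ is not {\tiny WOT}-lower semicontinuous) is well taken.
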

\begin{proof}
As we have remarked in Example \ref{Loc-WOT}, the P\'ales-Zeidan generalized Jacobian $\partial f$ defined in \cite{PZ-07} is a locally bounded pseudo-Jacobian for $f$, satisfying that $\partial f(x)$ is convex and {\tiny \emph{WOT}}-compact for every $x\in X$.
Now, by Proposition \ref{usc-2} we have
$$
\alpha_{\partial f}(x) = \inf\{/\hspace{-0.9mm}/T/\hspace{-0.9mm}/: T \in \partial f(x))\}.
$$
Furthermore, by Corollary \ref{crc-ex} (i), $\partial f$ satisfies the chain rule condition after a Fr\'echet-smooth renorming of $Y$, and Corollary \ref{reflex} applies.
\end{proof}

Now, we give some applications of the previous results to the invertibility of small Lipschitz perturbations of differentiable maps between Banach spaces.

\begin{cor}\label{sum}
Let $X, Y$ be reflexive spaces endowed with a Fr\'echet-smooth norm. Consider a map $f:X \to Y$ of the form $f=g+h$, where $g, h: X \to Y$ satisfy:
\begin{enumerate}
\item[(i)] $g$ is $C^1$-smooth on all of X, and $g'(x)$ is an isomorphism for each $x\in X$.
\item[(ii)] $h$ is locally Lipschitz, and ${\rm Lip}\, h(x) < \Vert g'(x)^{-1} \Vert^{-1}$  for each $x\in X$.
\item[(iii)] The following integral condition holds:
$$\int_0^{\infty} \inf_{\Vert x \Vert \leq t} \left( \Vert g'(x)^{-1} \Vert^{-1} - {\rm Lip}\, h(x)  \right) \, {\rm d}t =  \infty.$$
\end{enumerate}
Then, $f$ is a global homeomorphism from $X$ onto $Y$, whose inverse is  Lipschitz on bounded subsets of $Y$.
\end{cor}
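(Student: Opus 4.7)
The plan is to apply Corollary \ref{reflex} to $f=g+h$ by building an appropriate pseudo-Jacobian from the pieces. Set
$$
Jf(x) := \{g'(x)\} + \operatorname{Lip} h(x)\cdot \overline{B}_{\mathcal{L}(X,Y)}, \qquad x\in X.
$$
Since $\{g'(x)\}$ is a pseudo-Jacobian of $g$ at $x$ (Example \ref{wG}) and $\operatorname{Lip} h(x)\cdot \overline{B}_{\mathcal{L}(X,Y)}$ is a pseudo-Jacobian of $h$ at $x$ (Example \ref{Loc-Lip}), the sum rule noted after Definition \ref{pseudo-jacobian} gives that $Jf(x)$ is a pseudo-Jacobian of $f$ at $x$. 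The set $Jf(x)$ is convex (translate of a ball) and, because $Y$ is reflexive, the closed unit ball of $\mathcal{L}(X,Y)$ is \emph{WOT}-compact, so $Jf(x)$ is \emph{WOT}-compact. Local boundedness on $X$ follows from continuity of $g'$ and upper semicontinuity of $\operatorname{Lip} h$ (the latter being established inside Example \ref{Loc-Lip}), so $f$ is in particular locally Lipschitz.

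Next, I would verify upper semicontinuity of $Jf$ at every $x_0\in X$: given $\varepsilon>0$, continuity of $g'$ provides $\delta_1$ so that $\|g'(z)-g'(x_0)\|<\varepsilon/2$ for $z\in B(x_0,\delta_1)$, and upper semicontinuity of $\operatorname{Lip} h$ provides $\delta_2$ so that $\operatorname{Lip} h(z)\le \operatorname{Lip} h(x_0)+\varepsilon/2$ on $B(x_0,\delta_2)$; putting these together shows $Jf(z)\subset Jf(x_0)+\varepsilon\, B_{\mathcal{L}(X,Y)}$ on a common ball. By Proposition \ref{usc-2}, regularity and the regularity index reduce to a pointwise computation:
$$
\alpha_{Jf}(x)=\inf\bigl\{/\!\!/\, g'(x)+S\,/\!\!/\; :\; \|S\|\le \operatorname{Lip} h(x)\bigr\}.
$$
For each such $S$, writing $g'(x)+S = g'(x)(I+g'(x)^{-1}S)$ and noting $\|g'(x)^{-1}S\|\le \|g'(x)^{-1}\|\operatorname{Lip} h(x)<1$ by hypothesis $(ii)$, one sees $g'(x)+S$ is an isomorphism; the reverse triangle inequality then yields
$$
\alpha_{Jf}(x)\;\ge\; \|g'(x)^{-1}\|^{-1}-\operatorname{Lip} h(x)\;>\;0,
$$
so $Jf$ is regular at every $x\in X$.

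Finally, hypothesis $(iii)$ and this lower bound on $\alpha_{Jf}$ give
$$
\int_0^{\infty}\inf_{\|x\|\le t}\alpha_{Jf}(x)\,dt\;\ge\;\int_0^{\infty}\inf_{\|x\|\le t}\bigl(\|g'(x)^{-1}\|^{-1}-\operatorname{Lip} h(x)\bigr)\,dt\;=\;\infty,
$$
so the Hadamard integral condition of Corollary \ref{reflex} is satisfied. All hypotheses of that corollary are now in place (reflexive spaces, locally bounded pseudo-Jacobian with convex and \emph{WOT}-compact values, regularity everywhere, Hadamard integral condition), and it delivers that $f$ is a global homeomorphism from $X$ onto $Y$ whose inverse is Lipschitz on every bounded subset of $Y$. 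The main obstacle I foresee is clean verification of the chain rule condition for $Jf$; however, this is built into Corollary \ref{reflex} through the Fr\'echet-smooth renorming of $Y$ (which is available here by assumption and by Troyanski's theorem for $X$), so it need not be treated separately.
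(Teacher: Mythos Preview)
Your proposal is correct and follows essentially the same route as the paper: the same pseudo-Jacobian $Jf(x)=g'(x)+\operatorname{Lip} h(x)\cdot\overline{B}_{\mathcal{L}(X,Y)}$ is used, upper semicontinuity is combined with Proposition \ref{usc-2} to reduce regularity to a pointwise check, the isomorphism and co-norm estimates are obtained exactly as you do, and the conclusion is drawn from Corollary \ref{reflex}. The only cosmetic differences are that the paper cites Example \ref{Loc-WOT} (rather than arguing WOT-compactness of the ball directly from reflexivity of $Y$) and invokes Corollary \ref{crc-ex}(i) explicitly for the chain rule condition, whereas you correctly observe this is already absorbed into Corollary \ref{reflex}.
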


\begin{proof}
Consider the set-valued map $Jf :X \to 2^{{\mathcal L}(X, Y)}$ defined, for each $x\in X$, by
$$
Jf(x):=  g'(x) + {\rm Lip}\, h (x) \cdot \overline{B}_ {{\mathcal L}(X, Y)}.
$$
Taking into account Examples \ref{wG} and \ref{Loc-WOT} and the local Lipschitzness of $g$, it follows that $Jf$ is a locally bounded pseudo-Jacobian mapping for $f$,  and  $Jf(x)$ is convex and {\tiny \emph{WOT}}-compact for every $x\in X$. Thus, thanks to Corollary \ref{crc-ex} (i) we deduce that $Jf$ satisfies the chain rule condition. Now, we shall show that $Jf$ is regular. Because of Example \ref{Loc-Lip}, $Jf$ is upper semicontinuous on $X$. Thus, it suffices to see that $Jf$ satisfies conditions $(i)$ and $(ii)$ from Proposition  \ref{usc-2} at every $x\in X$.   For each $R\in {\mathcal L}(X, Y)$ with $\Vert R \Vert \leq {\rm Lip}\, h (x)$  we have $\Vert R \circ g'(x)^{-1}\Vert <1$. In particular, the operator ${\rm Id}_Y + R \circ g'(x)^{-1}$ is an isomorphism on $Y$. In this way we obtain that $g'(x) + R$ is an isomorphism. On the other hand,
$$\begin{aligned}
\alpha_{Jf}(x) &=
\inf\{/\hspace{-0.9mm}/g'(x)+R/\hspace{-0.9mm}/ \, : \,R \in {\mathcal L}(X, Y), \, \Vert R \Vert \leq {\rm Lip}\, h (x)\}
\\
&\geq /\hspace{-0.9mm}/g'(x)/\hspace{-0.9mm}/ - {\rm Lip}\, h (x) = \Vert g'(x)^{-1} \Vert^{-1} - {\rm Lip}\, h(x)>0.
\end{aligned}$$
Therefore, the result follows from Corollary \ref{reflex}.
\end{proof}

\

Our next example gives an similar application of Theorem  \ref{main-H} in the non-reflexive case.

\begin{ex}\label{L1}
Let $(\Omega, \mu)$ be a finite measure space, and consider a map  $f:L^1 (\mu) \to L^1 (\mu)$ of the form $f=g+h$, where $f,g:L^1(\mu)\to L^1(\mu)$ satisfy the following properties:
\begin{enumerate}
\item[(i)] $g$ is $C^1$-smooth on all of $L^1(\mu)$, and $g'(x)$ is an isomorphism for each $x\in L^1 (\mu)$.
\item[(ii)] There exist a number $1<p < \infty$ and a map $\hat{h}:L^1(\mu)\to L^p(\mu)$ such that $\hat{h}$ is locally Lipschitz, being ${\rm Lip}\, \hat{h}(x)< \|g'(x)^{-1}\|^{-1}$ for all $x\in L^1(\mu)$, and so that
$h = \hat{h}\circ i$, where $i:L^p(\mu)\hookrightarrow L^1 (\mu)$ denotes the canonical inclusion.
\item[(iii)] The following integral condition holds:
$$\int_0^{\infty} \inf_{\Vert x \Vert \leq t} \left( \Vert g'(x)^{-1} \Vert^{-1} - 2 \, {\rm Lip}\, h(x)  \right) \, {\rm d}t =  \infty.$$
\end{enumerate}
Then, $f$ is a global homeomorphism of $L^1(\mu)$, whose inverse is Lipschitz on bounded subsets of $L^1(\mu)$.
\end{ex}

\begin{proof}
Let us denote the usual norm of $L^1(\mu)$ by
$
\Vert x \Vert := \int_{\Omega} \vert x (\tau) \vert \, {\rm d}\mu(\tau),\,\, x\in L^1(\mu).
$
According to \cite[Theorem 4.2]{Bo-Fi}, there exists an equivalent norm $\Vert \cdot \Vert_0$ on $L^1(\mu)$ which is weak-Hadamard smooth and satisfies
$$
\Vert x \Vert_0 \leq \Vert x \Vert \leq \sqrt{2}\Vert x \Vert_0.
$$
We will consider $L^1(\mu)$ endowed with the norm $\Vert \cdot \Vert_0$. Note that, for this new norm, we have
$$
\frac{1}{\sqrt{2}} \, \Vert g'(x)^{-1} \Vert^{-1} \leq \Vert g'(x)^{-1} \Vert_0^{-1} \leq \sqrt{2} \,\Vert g'(x)^{-1} \Vert^{-1}
$$
and
$$
\frac{1}{\sqrt{2}} \, {\rm Lip}\, h(x) \leq {\rm Lip}_{\Vert \cdot \Vert_0} h(x) \leq \sqrt{2} \, {\rm Lip}\, h(x).
$$
Therefore,
$$
\Vert g'(x)^{-1} \Vert_0^{-1} -  {\rm Lip}_{\Vert \cdot \Vert_0} h(x) \geq \frac{1}{\sqrt{2}} \left( \Vert g'(x)^{-1} \Vert^{-1} - 2 \, {\rm Lip}\, h(x)  \right)
$$
for every $x\in L^1(\mu)$.

Denote by $\overline{B}_ {{\mathcal L}(L^1, L^p)}$ the unit ball of the space ${\mathcal L}(L^1 (\mu), L^p(\mu))$ when $L^1(\mu)$ is endowed with the norm $\Vert \cdot \Vert_0$ and $L^p(\mu)$ with its usual norm. By composing with the natural inclusion $L^p(\mu)\hookrightarrow L^1 (\mu)$ we consider $\overline{B}_ {{\mathcal L}(L^1, L^p)}$ as a {\tiny \emph{WOT}}-compact subset of ${\mathcal L}(L^1 (\mu), L^1(\mu))$. Now define the set-valued map $Jf :X \to 2^{{\mathcal L}(L^1 (\mu), L^1(\mu))}$ by
$$
Jf(x):=  g'(x) + {\rm Lip}_{\Vert \cdot \Vert_0} h (x) \cdot \overline{B}_ {{\mathcal L}(L^1, L^p)}.
$$
Taking into account Examples \ref{wG} and \ref{Loc-WOT}, we have that  $Jf$ is a pseudo-Jacobian mapping for $f$ such that $Jf(x)$ is convex and {\tiny \emph{WOT}}-compact for every $x\in L^1(\mu)$. From Example \ref{Loc-Lip}, we also have that $Jf$ is upper semicontinuous on $L^1(\mu)$. Moreover, given $x, v\in L^1(\mu)$, there exist constants $r, C> 0$ such that
$$
Jf ([x, x+rv]) \subset g'([x, x+rv]) + C \cdot \overline{B}_ {{\mathcal L}(L^1, L^p)},
$$
and bearing in mind that the map $g$ is $C^1$-smooth, it follows that $g'([x, x+rv]) + C \cdot \overline{B}_ {{\mathcal L}(L^1, L^p)}$ is a {\tiny \emph{WOT}}-compact subset of ${\mathcal L}(L^1 (\mu), L^1(\mu))$. In this way, using Corollary \ref{crc-ex} (iii), we obtain that $Jf$ satisfies the chain rule condition. Finally, the regularity of $Jf$ can be checked as in the previous result. Thus, the conclusion follows from Theorem \ref{main-H}.

\end{proof}

\

In the following example we consider the global invertibility of different types of 1-Lipschitz perturbations of the identity in a Hilbert space.

\begin{ex}
\rm{
Consider the separable Hilbert space $\ell_2$ and define the map $f:\ell_2 \to \ell_2$ by the formula
$$
f(x_1, x_2, x_3, \dots) = (x_1+ \theta (\vert x_2 \vert), x_2+ \theta(\vert x_3\vert), x_3+\theta(\vert x_4\vert), \dots),
$$
where $\theta:[0, +\infty) \to \mathbb R$ is given below. Then we have:
\begin{enumerate}
\item[(a)] Suppose that $\theta (t)=ct$, with $\vert c \vert <1$. Then, $f$ is a global homeomorphism with Lipschitz inverse.
\item[(b)] Suppose that $\theta (t)=t$. Then, $f$ is not bijective.
\item[(c)] Suppose that $\theta (t)=t-\log(1+t)$. Then, $f$ is a global homeomorphism, whose inverse is Lipschitz on bounded sets.
\end{enumerate}
\begin{proof}
We have that $f=g+h$, where $g:\ell_2 \to \ell_2$ is the identity map and $h:\ell_2 \to \ell_2$ is defined by
$$
h(x_1, x_2, x_3, \dots) = (\theta (\vert x_2 \vert), \theta(\vert x_3\vert), \theta(\vert x_4\vert), \dots).
$$

Consider first case (a). It is clear that in this case $h$ is $\vert c \vert$-Lipschitz on $\ell_2$.  Then for each $x\in \ell_2$ we have
$$
\Vert g'(x)^{-1} \Vert^{-1} - {\rm Lip}\, h(x) \geq 1- \vert c \vert >0.
$$
Thus, the conclusion follows from Corollary \ref{global-1} and the proof of Corollary \ref{sum}.

\

Next, consider case (b). Here we have that $f$ is not onto. Indeed, consider the vector $y=\left( \frac{-1}{n}\right)_{n \in \mathbb N} \in \ell_2$ and suppose that there exists some $x=(x_n)_{n \in \mathbb N} \in \ell_2$ with $f(x)=y$. Then, for each $n\in \mathbb N$ we have
$$
x_n = -\frac{1}{n}- \vert x_{n+1}\vert \leq -\frac{1}{n}.
$$
Therefore, for each $n\in \mathbb N$ we obtain
$$
x_1=-1-\vert x_2\vert\leq -1-\frac{1}{2}-\vert x_3\vert\leq \cdots \leq  -1-\frac{1}{2}-\cdots - \frac{1}{n}-\vert x_{n+1}\vert,
$$
which is not possible.

\

Finally, consider case (c). Given $t>0$, from elementary calculus we obtain
$$
\vert \theta (\vert r \vert) -\theta (\vert s\vert)\vert \leq \theta'(t)\, \vert r-s\vert = \frac{t}{1+t}\, \vert r-s\vert, \quad \text{for all} \quad r,s\in \mathbb R \quad \text{such that} \quad |r|, |s|\leq t.
$$
From here we deduce that the map $h$ is $\frac{t}{1+t}$-Lipschitz on the closed ball $\overline{B}(0, t)$ of $\ell_2$. As a consequence,
$$
\sup_{\Vert x \Vert \leq t} {\rm Lip}\,h(x) \leq \frac{t}{1+t}.
$$
Therefore,
$$\begin{aligned}
\int_0^{\infty} \inf_{\Vert x \Vert \leq t} \left( \Vert g'(x)^{-1} \Vert^{-1} - {\rm Lip}\, h(x)  \right) \, {\rm d}t
&= \int_0^{\infty} \inf_{\Vert x \Vert \leq t} \left( 1 - {\rm Lip}\, h(x)  \right) \, {\rm d}t\geq
\int_0^{\infty} \left( 1 - \frac{t}{1+t} \right) \, {\rm d}t \\ &= \int_0^{\infty} \frac{1}{1+t} \, {\rm d}t= \infty,
\end{aligned}$$
and the result follows from Corollary \ref{sum}.
\end{proof}
}
\end{ex}

We end this paper with a characterization of homeomorphisms between Banach spaces in terms of pseudo-Jacobians, which relies on our local invertibility result and Plastock's limiting condition for global invertibility of local homeomorphisms in \cite[Theorem 2.1]{Plastock}.

\

\begin{thm}\label{main2}
Let $f:X\to Y$ be a continuous map and let $Jf$ be a pseudo-Jacobian mapping for $f$ satisfying the chain rule condition. Suppose that, for every $x\in X$, we have that  $Jf$ is regular at $x$. Then the following conditions are equivalent:
\begin{enumerate}
\item[(a)] $f:X \to Y$ is a global homeomorphism;
\item[(b)] for each compact subset $K\subset Y$,
$$
\inf \{\alpha_{Jf}(x) \, : \, x\in f^{-1}(K)\} > 0;
$$
\item[(c)] $f:X \to Y$ is a global homeomorphism with a locally Lipschitz inverse.
\end{enumerate}
\end{thm}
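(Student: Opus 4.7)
The plan is to prove $(c) \Rightarrow (a) \Rightarrow (b) \Rightarrow (c)$; the first implication is trivial.

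For $(a) \Rightarrow (b)$, my strategy is to show that $\alpha_{Jf}\colon X \to [0,+\infty]$ is lower semicontinuous and then exploit compactness. The lower semicontinuity is immediate from the definition: if $r>0$ realizes $\inf\{/\hspace{-0.9mm}/T/\hspace{-0.9mm}/ : T\in \co\,Jf(B(x,r))\}$ within $\varepsilon$ of $\alpha_{Jf}(x)$, then for any $x'\in B(x,r/2)$ the inclusion $B(x',r/2)\subset B(x,r)$ yields $\co\,Jf(B(x',r/2))\subset \co\,Jf(B(x,r))$, whence $\alpha_{Jf}(x')\geq \alpha_{Jf}(x)-\varepsilon$. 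Assuming now that $f$ is a global homeomorphism, $f^{-1}$ is continuous, so $f^{-1}(K)$ is compact for each compact $K\subset Y$. Since $\alpha_{Jf}$ is strictly positive on $X$ by the regularity assumption, a lower semicontinuous strictly positive function on a compact set attains a strictly positive infimum, giving $(b)$.

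For $(b) \Rightarrow (c)$, Theorem \ref{local-inversion} already ensures that $f$ is a local homeomorphism with locally Lipschitz local inverses, so what remains is global bijectivity. I will invoke Plastock's theorem \cite[Theorem 2.1]{Plastock}, whose limiting condition reduces global invertibility of a local homeomorphism to the following path-extension property: every continuous lift $\tilde p\colon [0,T')\to X$ of a suitable path $p\colon [0,T]\to Y$ extends continuously to $[0,T']$. Fix such a path $p$ (which one may take to be a straight-line segment, hence Lipschitz with some constant $L$) and a lift $\tilde p$, and set $K:=p([0,T])$. By $(b)$, the number $\alpha := \inf\{\alpha_{Jf}(x) : x\in f^{-1}(K)\}$ is strictly positive. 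Since $\tilde p$ takes values in $f^{-1}(K)$, Theorem \ref{local-inversion} applied at each $\tilde p(t)$ with parameter $\alpha/2<\alpha_{Jf}(\tilde p(t))$ produces a ball around $\tilde p(t)$ on which $f$ is a homeomorphism with a $(2/\alpha)$-Lipschitz inverse. Using the continuity of $\tilde p$ and the Lipschitz estimate on $p$, this translates into a local $(2L/\alpha)$-Lipschitz bound for $\tilde p$ with respect to the parameter.

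A standard chaining argument over compact subintervals of $[0,T')$ (cover by finitely many local-Lipschitz intervals and use an adapted partition) promotes this to a global $(2L/\alpha)$-Lipschitz estimate for $\tilde p$ on $[0,T')$; hence $\tilde p$ is Cauchy as $t\to (T')^-$ and extends by completeness of $X$. This verifies Plastock's limiting condition and thus delivers $(c)$, with the local Lipschitzness of $f^{-1}$ following from one more application of Theorem \ref{local-inversion} at each preimage point. The main technical delicacy is the passage from local Lipschitz estimates on $\tilde p$ relative to $p$ to a global Lipschitz estimate in the parameter $t$; this step crucially uses that $p$ is Lipschitz, which is exactly the class of paths relevant to Plastock's limiting condition.
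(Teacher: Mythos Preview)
Your proof is correct and follows essentially the same route as the paper: lower semicontinuity of $\alpha_{Jf}$ for $(a)\Rightarrow(b)$, and Plastock's limiting condition together with a Lipschitz estimate on the lift for $(b)\Rightarrow(c)$. The only variation is cosmetic: where you obtain the Lipschitz bound on $\tilde p$ by repeatedly applying Theorem~\ref{local-inversion} and chaining over a fine partition, the paper first observes that $D^{-}_x f \geq \alpha_{Jf}(x)$ and then invokes the Mean Value Inequality from \cite[Proposition 3.9]{GutuJaramillo} to obtain $\|q(t_i)-q(t_j)\|\leq \alpha^{-1}\|p(t_i)-p(t_j)\|$ in one stroke.
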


\begin{proof}
Suppose that $(a)$ holds, and let $K$ be a compact subset of $Y$. Then, $f^{-1}(K)$ is  compact in $X$. From the very definition, it is easily seen that the function given by $x\mapsto \alpha_{Jf}(x)$ is lower semi-continuous on $X$. Therefore $\alpha_{Jf}(x)$ attains its minimum on  $f^{-1}(K)$, and taking into account that $\alpha_{Jf}(x)> 0$ for all $x\in X$, we obtain
$$
\inf\{\alpha_{Jf}(x) \, : \, x\in f^{-1}(K)\} > 0,
$$
so that condition $(b)$ is satisfied.

\

Now, suppose that assertion $(b)$ holds. From Theorem \ref{local-inversion} we have that $f$ is a local homeomorphism, and furthermore (using the same argument as in the proof of Theorem \ref{main-H}), for each $x\in X$ and each  $0< \alpha< \alpha_{Jf}(x)$ there exists $r> 0$ such that
\begin{equation}
\Vert f(z) - f(x) \Vert \geq \alpha\Vert z-x \Vert, \quad \text{for all} \quad z\in B(x, r).\label{lipschitz-local}
\end{equation}
Hence, $D^{-}_x f \geq \alpha$. In this way we see that $D^{-}_x f \geq \alpha_{Jf}(x)$ for every $x\in X$. Bearing in mind $(b)$ we have
\begin{equation}
\inf_{x\in f^{-1}(K)} D^{-}_{x} f > 0, \quad \text{for every compact set} \quad K\subset Y.\label{compact-scalarderivative}
\end{equation}
In order to see that $f$ is a global homeomorphism, according to \cite[Theorem 1.2]{Plastock} it suffices to show that $f$ satisfies the the following \emph{limiting condition}:

\medskip

{\rm (L)} \emph{For every line segment $p:[0, 1] \to Y$,  for every $0<b\leq 1$ and for every continuous path $q:[0, b)\to X$ satisfying that $f(q(t))=p(t)$ for every $t\in [0, b)$, there exists a sequence $(t_j)_j$ in $[0, b)$ convergent to $b$ and such that the sequence $\left(q(t_j)\right)_j$ converges in $X$.}

\medskip

This can be done arguing as in the proof of \cite[Theorem 4.3]{JaMaSa1}. We include the details for the sake of completeness. Consider a line segment $p:[0, 1] \to Y$ given by $p(t)=(1-t)y_0+ty_1$ for some $y_0, y_1\in Y$, pick $0<b\leq 1$, and let $q:[0, b)\to X$ be a continuous path satisfying $f(q(t))=p(t)$ for every $t\in [0, b)$. Let $K:=p([0,1])$. Then, $K$ is a compact subset of $Y$ and $q([0, b))\subset f^{-1}(K)$. So, by \eqref{compact-scalarderivative}, there is $\alpha >0$ such that $\alpha_{Jf}(x) > \alpha>0,$ for all  $x\in q([0, b)).$
Let $(t_j)_j$ be any sequence in $[0,b)$ such that $t_j\to b$. According to the Mean Value Inequality in \cite[Proposition 3.9]{GutuJaramillo}  we have
$$\|q(t_i)-q(t_j)\|\leq \alpha^{-1}\|p(t_i)-p(t_j)\|= \alpha^{-1}|t_i-t_j|\|y_0-y_1\|, \quad \text{whenever} \quad i,j\in \mathbb N.$$
Thus, the sequence $(q(t_j))_j$ is Cauchy, and therefore convergent in $X$. Thus, condition $(L)$ is fulfilled, and $f$ is an homeomorphism from $X$ onto $Y$. Moreover, thanks to \eqref{lipschitz-local} we have that the inverse map $f^{-1}$ is locally Lipschitz on $Y$.

\end{proof}


\begin{thebibliography}{998}

\bibitem{Bo-Fi} J. M. Borwein and S. Fitzpatrick, \emph{A weak Hadamard smooth renorming of $L^1(\Omega, \mu)$}, Canad. Math. Bull. \textbf{36} (1993), no. 4, 407--413.

\bibitem{Clarke} F. H. Clarke, \emph{On the inverse function theorem}, Pacific J. Math. \textbf{64} (1976), no. 1, 97--102.

\bibitem{Clarkebook} F. H. Clarke, \emph{Optimization and Nonsmooth Analysis}, Classics in Applied Mathematics 5, SIAM, Philadelphia (1990).

\bibitem{Ekeland} I. Ekeland, \emph{An inverse function theorem in Fr\'echet spaces}, Ann. Institut Henri Poincar\'e, Analyse non lin\'eaire \textbf{28} (2011), 91--105.

\bibitem{Fabian} M. Fabian, P. Habala, P. H\'ajek, V. Montesinos and V. Zizler, \emph{Banach Space Theory - The basis for Linear and Nonlinear Analysis}, Springer, New York (2011).

\bibitem{GaGuJa} I. Garrido, O. Gut\'u and J. A. Jaramillo, \emph{Global inversion and covering maps on length spaces}, Nonlinear Anal. \textbf{73} (2010), no. 5, 1364--1374.

\bibitem{Gu} O. Gut\'u, \emph{On global inverse theorems}, Topol. Methods in Nonlinear Anal. \textbf{49} (2017), no. 2, 401--444.	

\bibitem{GutuJaramillo} O. Gut\'u and J. A. Jaramillo, \emph{Global homeomorphisms and covering projections on metric spaces}, Math. Ann. \textbf{338} (2007), no. 5, 75--95.

\bibitem{Hadamard} J. Hadamard, \emph{Sur les transformations ponctuelles}, Bull. Soc. Math. France \textbf{34}  (1906), 71--84.

\bibitem{Ioffe} A. D. Ioffe, \emph{Nonsmooth analysis: differential calculus of nondifferentiable mappings}, Trans. Amer. Math. Soc. \textbf{266} (1981), 1--56.

\bibitem{JaMaSa}  J. A. Jaramillo,  O. Madiedo and L. S\'{a}nchez-Gonz\'{a}lez, \emph{Global inversion of nonsmooth  mappings on Finsler manifolds}, J. Convex Anal. \textbf{20} (2013), no. 4, 1127--1146.

\bibitem{JaMaSa1} J. A. Jaramillo,  O. Madiedo and L. S\'{a}nchez-Gonz\'{a}lez, \emph{Global inversion of nonsmooth mappings using pseudo-Jacobian matrices} Nonlinear Anal. \textbf{108} (2014), 57--65.

\bibitem{JL0}V. Jeyakumar and D.T. Luc, \emph{Approximate Jacobian matrices for nonsmooth continuous maps and $C^1$-optimization},  SIAM J. Control Optim. \textbf{36} (1998), no. 5, 1815--1832.

\bibitem{JL1}V. Jeyakumar and D.T. Luc, \emph{An open mapping theorem using unbounded generalized Jacobians}, Nonlinear Anal. \textbf{50} (2002), no. 5, 647--663.

\bibitem{JL} V. Jeyakumar and D.T. Luc, \emph{Nonsmooth Vector Functions and Continuous Optimization}, Optimization and its Applications 10, Springer, New York (2008).

\bibitem{John} F. John, \emph{On quasi-isometric maps I}, Comm. Pure Appl. Math. \textbf{21} (1968), 77--110.

\bibitem{Le} P. L\'evy, \emph{Sur les fonctions des lignes implicites}, Bull. Soc. Math. France \textbf{48} (1920), 13--27.

\bibitem{Luc}D. T. Luc, \emph{Chain rules for approximate Jacobians of continuous functions}, Nonlinear Anal. \textbf{61} (2005), no. 1-2,  97--114.

\bibitem{PZ-07} Z. P\'ales and V. Zeidan, \emph{Generalized Jacobian for Functions with Infinite Dimensional Range and Domain}, Set-Valued Anal. \textbf{15} (2007), 331--375.

\bibitem{Ph} R.R. Phelps, \emph{Convex functions, monotone operators and differentiability}, Lecture Notes in Mathematics 1363, Springer-Verlag, Berling-Heidelberg (1993).

\bibitem{Plastock} R. Plastock, \emph{Homeomorphisms between Banach spaces}, Trans. Amer. Math. Soc. \textbf{200} (1974), 169--183.

\bibitem{Pourciau1982} B. H. Pourciau, \emph{Hadamard's theorem for locally Lipschitzian maps}, J. Math. Anal. Appl. \textbf{85} (1982),  279--285.

\bibitem{Pourciau1988} B. H. Pourciau, \emph{Global lnvertibility of Nonsmooth Mappings}, J. Math. Anal. Appl. \textbf{131} (1988),	170--179.

\bibitem{Rabier} P. J. Rabier, \emph{Ehresmann fibrations and Palais-Smale conditions for morphisms of Finsler manifolds}, Ann. of Math.  \textbf{146}, no. 3, (1997),  647--691.


\bibitem{Thibault} L. Thibault, \emph{On generalized differentials and subdifferentials of Lipschitz vector-valued functions}, Nonlinear Anal. \textbf{10} (1982), 1037--1053.

\bibitem{Troyanski} S. Troyanski, \emph{On locally uniformly convex and differentiable norms in certain non-separable spaces}, Studia Math. \textbf{37} (1971), 173--180.




\end{thebibliography}
\end{document}